\newtheorem{lemma}{Lemma}[section]
\newtheorem{theorem}[lemma]{Theorem}
\newtheorem{korollar}[lemma]{Corollary}
\newtheorem{anmerkung}[lemma]{Remark}
\newtheorem{definition}[lemma]{Definition}
\numberwithin{equation}{section}
\newcommand*{\N}{\mathbb{N}}
\newcommand*{\e}{\operatorname{e}}
\begin{document}

\title{On convergence of discrete methods of least squares on equidistant nodes}
\author{Ren\'e Goertz}
\date{\today}
\maketitle

\begin{section}*{Abstract}
\label{abstract}

We consider the well-known method of least squares on an equidistant grid with $N+1$ nodes on the interval $[-1,1]$ with the goal to approximate a function $f\in\mathcal{C}\left[-1,1\right]$ by a polynomial of degree $n$. We investigate the following problem: For which\linebreak ratio $N/n$ and which functions do we have uniform convergence of the least square operator ${LS}_n^N:\mathcal{C}\left[-1,1\right]\rightarrow\mathcal{P}_n$? We investigate this problem with a discrete\linebreak weighting of the Jacobi-type. Thereby we describe the least square operator ${LS}_n^N$ by the expansion of a function by Hahn polynomials $Q_k\left(\cdot;\alpha,\beta,N\right)$. Without additional assumptions to functions $f\in\mathcal{C}\left[-1,1\right]$ it can not be guaranteed uniform convergence. But with $\alpha=\beta$ and additional assumptions to $f$ and $\left(N_n\right)_{n\in\mathbb{N}}$ we obtain convergence and prove the following results: For an $\alpha\geq0$ let $f\in\left\{g\in\mathcal{C}^\infty\left[-1,1\right]:\ \lim\limits_{n\to\infty}{\sup\limits_{x\in[-1,1]}{\left\lvert g^{(n)}(x)\right\rvert}\frac{n^{\alpha+1/2}}{2^nn!}}=0\right\}$ and let $(N_n)_{n}$ be a sequence of natural numbers with $N_n\geq2n(n+1)$. Then the method of least squares ${LS}_n^{N_n}[f]$ converges uniform on $[-1,1]$. Before we determine the maximum error (\glqq worst case\grqq) with respect to the sup norm on the classes\\ $\mathcal{K}_{n+1}:=\left\{f\in\mathcal{C}^{n+1}\left[-1,1\right]:\ \sup\limits_{x\in[-1,1]}{\left\lvert f^{(n+1)}(x)\right\rvert\leq1}\right\}$.
\end{section}

\begin{section}{Introduction and statement of the main results}
\label{introduction}

It is over $200$ years ago since Legendre, Gau\ss\ and others started working with the method of least squares (cf., e.g., \cite{merriman1877history}). Since then, the method is used in many areas of mathematics and is nowadays a basic tool of applied mathematics (cf., e.g., \cite{abdulle2002200}, \cite{arens2015mathematik}, \cite{bjorck1996numerical}, \cite{gautschi2011numerical}, \cite{schwarz2011numerische}). Our focus in this paper is the pure approximation property of the method.
\par\medskip
The method of least squares is defined as follows (cf., e.g., \cite[p. 59]{gautschi2011numerical}, \cite[p. 217]{gautschi2004orthogonal}, \cite[p. 291]{milne1949numerical}):\\
Let $x_\mu\in\left[a,b\right]$ be distinct nodes for $\mu=0,\dots,N$. Further let $\omega:\left[a,b\right]\rightarrow\mathbb{R}$ be a weight-function, which is positive on $\left\{x_\mu\right\}_{\mu=0}^N$. For a $n\leq N$ let $U$ be a subspace of $\mathcal{C}\left[a,b\right]$ with $\mbox{dim}\, U=n+1$ on $\left\{x_\mu\right\}_{\mu=0}^N$. The least square operator ${LS}_n^N:\mathcal{C}\left[a,b\right]\rightarrow U$ is unique defined by
\begin{align*}
\sum\limits_{\mu=0}^N{\left({LS}_n^N[f]\left(x_\mu\right)-f\left(x_\mu\right)\right)^2 \omega\left(x_\mu\right)}=\min_{\varphi\in U}{\sum\limits_{\mu=0}^N{\left(\varphi\left(x_\mu\right)-f\left(x_\mu\right)\right)^2 \omega\left(x_\mu\right)}}.
\end{align*}
In this paper we investigate the standard case:
\begin{itemize}
	\item $U=\mathcal{P}_n$ is the space of polynomials of degree $n$,
	\item $\left\{x_0,\dots,x_N\right\}$ is an equidistant grid with $N+1$ nodes on the interval $[-1,1]$, i.\ e.\ $x_\mu=-1+2\mu/N$ for $\mu=0,\dots,N$.
\end{itemize}
This situation is often occur in the practice: Since centuries polynomials are an intensive investigated function class to approximation. Moreover they can be applied effective on computers, because only the elementary operations addition and multiplication will be used for every computation. Equidistant collected informations are often exist, especially due to the data collection on big (often multidimensional) equidistant grids.
\par\medskip
Without loss of generality let the interval $[a,b]$ be the standard interval $[-1,1]$ in this paper.
\par\medskip
We investigate the following \textbf{problem}:
\par\medskip
For which functions $f\in K\subset\mathcal{C}\left[-1,1\right]$ and which ratio $N/n$ converges the sequence $\left({LS}_n^N[f]\right)$ uniformly?
\par\medskip
To investigate the above problem we describe the least square operator ${LS}_n^N$ by the expansion of a function by Hahn polynomials $Q_k\left(\cdot;\alpha,\beta,N\right)$. The Hahn polynomials $Q_k\left(\cdot;\alpha,\beta,N\right)$ are classical discrete orthogonal polynomials on the interval $I=\left[0,N\right]$ of degree $k$. They are orthogonal on $I$ with respect to the inner product
\begin{align*}
\left<f,g\right>_\omega:=\sum\limits_{i=0}^N{f(i)g(i)\omega(i)},
\end{align*}
where $\omega$ is the weight-function given by
\begin{align*}
\omega(x):=\binom{\alpha+x}{x}\binom{\beta+N-x}{N-x}.
\end{align*}
They are normalized by
\begin{align*}%\label{orthoghahn1ae}
\left<Q_k(\cdot;\alpha,\beta,N),Q_k(\cdot;\alpha,\beta,N)\right>_\omega=\frac{(-1)^k(k+\alpha+\beta+1)_{N+1}(\beta+1)_k k!}{(2k+\alpha+\beta+1)(\alpha+1)_k(-N)_k N!}
\end{align*}
(cf., e.g., \cite[p. 204]{koekoek2010hypergeometric}).
\par\medskip
It is well-known that the least square operator ${LS}_n^N$ can be represented by use of Hahn polynomials (cf., e.g., \cite[p. 62-63]{gautschi2011numerical}, \cite[p. 270]{milne1949numerical}, \cite[p. 218-232]{wernerfunktionalanalysis}):
\begin{align}\label{darstellungmethodediskret}
{LS}_n^N[f]=\sum\limits_{k=0}^n{\frac{\left<f\left(\frac{2}{N}(\cdot)-1\right),Q_k\right>_\omega}{\left<Q_k,Q_k\right>_\omega}Q_k\left(\frac{N}{2}(1+\cdot)\right)},
\end{align}
where $f\in\mathcal{C}\left[-1,1\right]$.
\par\medskip
Without an additional assumption to the functions $f\in\mathcal{C}\left[-1,1\right]$ the uniform convergence of the sequence $\left({LS}_n^N[f]\right)$ can not be guaranteed (cf., e.g., \cite[p. 106, Satz 4.10]{schonhage1971approximationstheorie}). Hence we have to reduce the function class.
\par\medskip
The Hahn polynomials $Q_n\left(\cdot;\alpha,\beta,N\right)$ can be interpreted as a discretization of the Jacobi polynomials $P_n^{\alpha,\beta}$. Because for a fixed $n$ the following relation between Hahn polynomials $Q_n\left(\cdot;\alpha,\beta,N\right)$ and Jacobi polynomials $P_n^{\alpha,\beta}$ is well-known.
\begin{align*}%\label{grenzhahnjacobi}
\lim_{N\to\infty}{(-1)^n\binom{n+\alpha}{n}Q_n\left(\frac{N}{2}(1+x);\alpha,\beta,N\right)}=P_n^{\beta,\alpha}(x),
\end{align*}
for each $x\in[-1,1]$ (cf., e.g., \cite[p. 45]{nikiforov1991classical}).
\par\medskip
For all approximation results in this paper we consider the important symmetric (so-called ultraspherical) case $\alpha=\beta$. The close connection between the series expansion of a function by Jacobi polynomials and the series expansion by Hahn polynomials, cf. (\ref{darstellungmethodediskret}), which have been proved in \cite{goertzoeffner2016hahn}, is the motivation for my here presented, from Thomas Sonar and Tom Koornwinder inspired investigations: The series expansion by Jacobi polynomials is in the last decades a proved method to modelling. However it has to be evaluated integrals to calculate the coefficients. Usually this is done by discretization with the aid of methods of quadrature theory. Since it has to be discretized to approximate the integral, the question is obviously, if equivalently results can be obtained directly with the aid of discrete orthogonal polynomials, therefore without calculation of integrals. The main Theorem is:

\begin{theorem}\label{theoremgleichmaessigekonvergenz1e}
Let $\alpha>-\frac{1}{2}$ and let for $N\in\N$
\begin{align*}
n(\alpha,N):=\frac{1}{2}-\alpha+\frac{1}{2}\sqrt{(2\alpha+1)(2\alpha+2N+1)}.
\end{align*}
Further let
\begin{align*}
D_{n,N}:=\frac{2^{n+1}\Gamma\left(n+2\alpha+2\right)\Gamma\left(n+\alpha+2\right)}{(n+1)!\Gamma\left(2n+2\alpha+3\right)\Gamma\left(\alpha+1\right)}\frac{N!}{N^{n+1}(N-n-1)!}.
\end{align*}
For each $f\in\mathcal{C}^{n+1}\left[-1,1\right]$ with $n+1\leq n(\alpha,N)$ holds
\begin{align}\label{theoremgleichmaessigekonvergenz1ae}
\sup\limits_{x\in[-1,1]}{\left\lvert f(x)-\sum\limits_{k=0}^n{\frac{\left<f,Q_k\right>_\omega}{\left<Q_k,Q_k\right>_\omega}Q_k\left(\frac{N}{2}(1+x)\right)}\right\rvert}\leq D_{n,N}\sup\limits_{x\in[-1,1]}{\left\lvert f^{(n+1)}(x)\right\rvert}.
\end{align}
This estimation is not improvable in this sense, that the constant $D_{n,N}$ in inequality (\ref{theoremgleichmaessigekonvergenz1ae}) can not be replaced by a lower value under the above assumptions.
\end{theorem}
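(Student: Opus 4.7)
The plan is to apply the Peano kernel theorem to the pointwise error functional, reduce the problem to evaluating the error on the single polynomial $\phi(x) := x^{n+1}/(n+1)!$, and then use the explicit form of the symmetric ($\alpha=\beta$) Hahn polynomial to recover $D_{n,N}$. First, from representation~(\ref{darstellungmethodediskret}) and the $\omega$-orthogonality of the $Q_k$'s, the operator $LS_n^N$ reproduces $\mathcal{P}_n$; hence, for every fixed $x\in[-1,1]$, the linear functional $L_x(f) := f(x) - LS_n^N[f](x)$ annihilates $\mathcal{P}_n$. Peano's theorem then provides the representation
\begin{align*}
L_x(f) = \int_{-1}^1 K_n^N(x,t)\, f^{(n+1)}(t)\, dt,\qquad K_n^N(x,t) := L_x\!\left[\frac{(\cdot - t)_+^n}{n!}\right],
\end{align*}
so that
\begin{align*}
\left\lvert f(x) - LS_n^N[f](x)\right\rvert \leq \sup_{t\in[-1,1]}{\left\lvert f^{(n+1)}(t)\right\rvert} \int_{-1}^1 \left\lvert K_n^N(x,t)\right\rvert dt.
\end{align*}

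Next I would show that, under the assumption $n+1\leq n(\alpha,N)$, the kernel $t\mapsto K_n^N(x,t)$ has constant sign on $[-1,1]$ for every fixed $x$. Once this is established, the $L^1$ norm of the kernel coincides with $\left\lvert L_x[\phi]\right\rvert = \left\lvert\phi(x) - LS_n^N[\phi](x)\right\rvert$, because $\phi^{(n+1)}\equiv 1$. Now $\phi\in\mathcal{P}_{n+1}$, so the residual $\phi - LS_n^N[\phi]$ is a polynomial of degree exactly $n+1$ which is $\omega$-orthogonal to $\mathcal{P}_n$ and has leading coefficient $1/(n+1)!$; uniqueness of the monic discrete orthogonal polynomial forces
\begin{align*}
\phi(x) - LS_n^N[\phi](x) = \frac{1}{(n+1)!\,k_{n+1}}\, Q_{n+1}\!\left(\tfrac{N}{2}(1+x);\alpha,\alpha,N\right),
\end{align*}
with $k_{n+1}$ the leading coefficient of the transplanted Hahn polynomial on $[-1,1]$. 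Inserting the explicit value of $k_{n+1}$ from the hypergeometric representation of $Q_{n+1}$, using $Q_{n+1}(0;\alpha,\alpha,N)=1$ and the symmetry $\alpha=\beta$ (hence $\left\lvert Q_{n+1}(N;\alpha,\alpha,N)\right\rvert=1$), and simplifying the resulting Gamma-function expression reduces the endpoint values $\left\lvert\phi(\pm 1) - LS_n^N[\phi](\pm 1)\right\rvert$ to exactly $D_{n,N}$. The threshold $n+1\leq n(\alpha,N)$ is then the condition that makes this endpoint value the actual continuous supremum of $\left\lvert Q_{n+1}\right\rvert$ on $[-1,1]$. Sharpness follows by inserting $f=\phi$, which satisfies $\sup_{x\in[-1,1]}\left\lvert\phi^{(n+1)}(x)\right\rvert=1$, into the inequality.

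The hard part is the constant-sign claim for the Peano kernel. I would approach it either by combining the Hahn-to-Jacobi limit recalled in the introduction --- where the analogous sign property for the Peano kernel of the truncated Jacobi projector is classical --- with a quantitative discretisation bound whose regime of validity is precisely encoded by $n(\alpha,N)$, or, more directly, by locating the zeros of $Q_{n+1}$ on the continuous interval $[0,N]$ and expressing $K_n^N(x,\cdot)$ through a Christoffel--Darboux-type reproducing kernel whose positivity can then be read off these zeros.
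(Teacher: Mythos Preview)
Your overall strategy---Peano kernel plus identification of the monic residual with the transplanted $Q_{n+1}$---is the right instinct and is in fact what underlies Brass's lemma, which the paper simply quotes. But the argument as written has a genuine error, not just an acknowledged gap: the claim that $t\mapsto K_n^N(x,t)$ has constant sign for \emph{every} $x\in[-1,1]$ is false. Take any interior zero $x_0$ of $Q_{n+1}\!\left(\tfrac{N}{2}(1+\cdot)\right)$; then $\int_{-1}^1 K_n^N(x_0,t)\,dt = L_{x_0}(\phi)=0$, so constant sign would force $K_n^N(x_0,\cdot)\equiv 0$, which is absurd (already in the continuous Legendre case $n=1$, $\alpha=0$, one checks directly that $K(1/\sqrt{3},\cdot)$ changes sign). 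Consequently the identity $\int|K_n^N(x,t)|\,dt=|L_x(\phi)|$ fails at interior $x$, and your bound $\sup_x\int|K_n^N(x,t)|\,dt\le D_{n,N}$ is not established. Neither of your proposed fixes (limit from Jacobi, or Christoffel--Darboux plus zero location) addresses this; they at best give the sign at $x=\pm1$, which yields sharpness but not the upper bound.

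What actually makes the upper bound work---and what your write-up never uses---is the hypothesis that the endpoint-maximum property $\sup_{[-1,1]}|\hat Q_k|=\hat Q_k(1)$ holds for \emph{all} $k=0,\dots,n+1$, not just for $k=n+1$. This is exactly the content of Brass's general lemma (the paper's Lemma~\ref{satzbrass1}), and the paper's proof consists of verifying these hypotheses for the symmetric Hahn system (Lemma~\ref{lemmadistributions2}), invoking Brass, and then computing the leading coefficient to obtain $D_{n,N}$ (Lemma~\ref{lemmadistributions4}). The endpoint-maximum property itself, valid precisely when $k\le n(\alpha,N)$, is a nontrivial inequality for Hahn polynomials due to Dette; you assert it in passing (``the condition that makes this endpoint value the actual continuous supremum'') without proof or reference. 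So two ingredients are missing: the full-range endpoint bound on the $\hat Q_k$, and the mechanism (Brass's argument) by which that bound for all $k\le n+1$ converts into the uniform estimate with the sharp constant.
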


That provides a possibility to compare the directly and the classical method by consideration the maximum error (\glqq worst case\grqq) in the function classes
\begin{align}\label{klassefunkkne}
\mathcal{K}_{n+1}:=\left\{f\in\mathcal{C}^{n+1}\left[-1,1\right]:\ \sup\limits_{x\in[-1,1]}{\left\lvert f^{(n+1)}(x)\right\rvert\leq1}\right\}.
\end{align}
This maximum error is according to (\ref{theoremgleichmaessigekonvergenz1ae}) the constant $D_{n,N}$ and this is lower than in the corresponding classical case for $n+1\leq\frac{1}{2}-\alpha+\frac{1}{2}\sqrt{(2\alpha+1)(2\alpha+2N+1)}$ for each ratio $N/n$. That is proved in section \ref{VergleichzumKontinuierlichenFall}.
\par\medskip
In section \ref{Conclusions} we present further possible applications. For example the following result:
\par\medskip
Let $\alpha\geq0$, let
\begin{align*}
f\in K:=\left\{g\in\mathcal{C}^\infty\left[-1,1\right]:\ \lim_{n\to\infty}{\sup\limits_{x\in[-1,1]}{\left\lvert g^{(n)}(x)\right\rvert}\frac{n^{\alpha+\frac{1}{2}}}{2^nn!}}=0\right\}
\end{align*}
and let $(N_n)_{n\in\mathbb{N}}$ be a sequence with $N_n\geq2n(n+1)$. Then the method of least squares ${LS}_n^{N_n}[f]$ converges uniform on $[-1,1]$.
\par\medskip
We compare our approximation results with corresponding results for the continuous case in section \ref{Conclusions}. In the next section we demonstrate preliminary Lemmata and prove our main Theorem \ref{theoremgleichmaessigekonvergenz1e}.

\end{section}

\begin{section}{Preliminaries}
\label{Preliminaries}

For our investigations is the following result from H. Brass fundamental:

\begin{lemma}[cf. \cite{brass1984error}]\label{satzbrass1}
Let $\mathrm{d}\sigma$ a distribution on $\left[-1,1\right]$ and let
\begin{align*} 
\left\{q_k:\ k=0,\dots,n+1\right\}
\end{align*}
be a family of orthogonal polynomials, which are orthogonal with respect to the inner product
\begin{align*}
(f,g)_\sigma:=\int\limits_{-1}^1{f(x)g(x)\mathrm{d}\sigma(x)}.
\end{align*}
The polynomials are normalized by $\left(q_k,q_k\right)_\sigma=1$. Furthermore the distribution $\mathrm{d}\sigma$ satisfy the properties:
\begin{itemize}
  \item $\int_{-1}^1{f(x)\mathrm{d}\sigma(x)}=\int_{-1}^1{f(-x)\mathrm{d}\sigma(x)}$ f\"ur jedes $f\in\mathcal{C}\left[-1,1\right]$,
	\item $\sup\limits_{x\in[-1,1]}{\left\lvert q_k(x)\right\rvert}=q_k(1)$ f\"ur jedes $k=0,\dots,n+1$.
\end{itemize}
Let
\begin{align*}
C_n:=\frac{\sup\limits_{x\in[-1,1]}{\left\lvert q_{n+1}(x)\right\rvert}}{\sup\limits_{x\in[-1,1]}{\left\lvert q_{n+1}^{(n+1)}(x)\right\rvert}}.
\end{align*}
Then one has for each $f\in\mathcal{C}^{n+1}\left[-1,1\right]$
\begin{align}\label{satzbrass1a}
\sup\limits_{x\in[-1,1]}{\left\lvert f(x)-\sum\limits_{k=0}^n{\left(f,q_k\right)_\sigma q_k(x)}\right\rvert}\leq C_n\sup\limits_{x\in[-1,1]}{\left\lvert f^{(n+1)}(x)\right\rvert}.
\end{align}
This estimation is not improvable in this sense, that the constant $C_n$ in inequality (\ref{satzbrass1a}) can not be replaced by a lower value under the above assumptions.
\end{lemma}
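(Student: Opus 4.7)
The plan is to represent the error functional
\begin{align*}
L_x[f]:=f(x)-\sum_{k=0}^n(f,q_k)_\sigma\,q_k(x),\qquad x\in[-1,1],
\end{align*}
as a Peano kernel integral and to exploit a sign property of the kernel. By orthonormality $L_x$ annihilates $\mathcal{P}_n$. Applying Taylor's theorem at $y=-1$ with integral remainder and then the (linear, $y$-acting) functional $L_x$, which kills the degree-$n$ Taylor polynomial, I obtain for every $f\in\mathcal{C}^{n+1}[-1,1]$
\begin{align*}
L_x[f]=\frac{1}{n!}\int_{-1}^1 G(x,t)\,f^{(n+1)}(t)\,\mathrm{d}t,\qquad G(x,t):=L_x\!\bigl[(\,\cdot\,-t)_+^n\bigr],
\end{align*}
so that $|L_x[f]|\le\frac{1}{n!}\bigl(\int_{-1}^1|G(x,t)|\,\mathrm{d}t\bigr)\sup_{t}|f^{(n+1)}(t)|$.

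The decisive step is to establish that for each fixed $x\in[-1,1]$ the map $t\mapsto G(x,t)$ has constant sign on $[-1,1]$. Granted this, $\int|G(x,t)|\,\mathrm{d}t=\bigl|\int G(x,t)\,\mathrm{d}t\bigr|$. Writing $a_{n+1}$ for the leading coefficient of $q_{n+1}$, the elementary identity $\frac{1}{n!}\int_{-1}^1(y-t)_+^n\,\mathrm{d}t=(y+1)^{n+1}/(n+1)!$, annihilation of $\mathcal{P}_n$ by $L_x$, and $q_{n+1}\perp_\sigma\mathcal{P}_n$ combine to give
\begin{align*}
\frac{1}{n!}\int_{-1}^1 G(x,t)\,\mathrm{d}t=L_x\!\left[\frac{y^{n+1}}{(n+1)!}\right]=\frac{q_{n+1}(x)}{a_{n+1}(n+1)!}.
\end{align*}
Using the hypothesis $\sup_y|q_{n+1}(y)|=q_{n+1}(1)$ together with the identity $q_{n+1}^{(n+1)}\equiv a_{n+1}(n+1)!$ (so $\sup_y|q_{n+1}^{(n+1)}(y)|=a_{n+1}(n+1)!$), taking sup over $x$ yields exactly the bound $C_n\sup_t|f^{(n+1)}(t)|$ of (\ref{satzbrass1a}).

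Sharpness comes from the explicit extremal function $f_\star(y):=q_{n+1}(y)/[a_{n+1}(n+1)!]$: then $f_\star^{(n+1)}\equiv 1$, while orthogonality forces $(f_\star,q_k)_\sigma=0$ for $k\le n$, so the projection of $f_\star$ vanishes and the left-hand side of (\ref{satzbrass1a}) equals $q_{n+1}(1)/[a_{n+1}(n+1)!]=C_n$, attaining the constant.

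The principal obstacle is the sign-constancy of $G(x,\cdot)$, and both structural assumptions on $\mathrm{d}\sigma$ are used exactly there: the symmetry reduces the analysis to $x\in[0,1]$, and the extremality $\sup|q_k|=q_k(1)$ shapes the Christoffel--Darboux kernel $K_n(x,s)=\sum_{k=0}^n q_k(x)q_k(s)$ so that the sign changes of
\begin{align*}
G(x,t)=(x-t)_+^n-\int_{-1}^1(s-t)_+^n K_n(x,s)\,\mathrm{d}\sigma(s)
\end{align*}
are controlled by the zeros of $q_n$ and $q_{n+1}$. A zero-counting argument based on the interlacing of zeros of consecutive orthogonal polynomials --- which I expect to be the technical core of Brass' original argument --- should rule out all sign changes of $G(x,\cdot)$ on $[-1,1]$ beyond the kink at $t=x$, completing the Peano estimate.
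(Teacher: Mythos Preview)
The paper does not supply its own proof of this lemma: it is quoted verbatim from Brass \cite{brass1984error} and used as a black box. So there is no ``paper's proof'' to compare against; the relevant benchmark is Brass' original argument.

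Your Peano--kernel framework is exactly the right setting and almost certainly coincides with Brass' method: write $L_x[f]=\frac{1}{n!}\int_{-1}^{1}G(x,t)f^{(n+1)}(t)\,\mathrm{d}t$, observe that definiteness of $G(x,\cdot)$ would let you replace $\int|G|$ by $|\int G|$, and then evaluate the latter on the monomial $y^{n+1}/(n+1)!$ to get $q_{n+1}(x)/q_{n+1}^{(n+1)}$. The sharpness computation via $f_\star=q_{n+1}/[a_{n+1}(n+1)!]$ is also correct (modulo a harmless sign on $a_{n+1}$).

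However, your write-up has a genuine gap precisely at the point you yourself flag. You do not prove that $G(x,\cdot)$ has constant sign; you only announce that ``a zero-counting argument based on the interlacing of zeros \dots\ should rule out all sign changes.'' That is the entire content of the lemma --- everything else is bookkeeping --- and it is not obvious. The symmetry hypothesis and the extremality hypothesis $\sup|q_k|=q_k(1)$ are both needed in a specific way, and a vague appeal to interlacing is not a proof. In Brass' paper the definiteness of the kernel is the technical heart and requires a careful argument (in effect one shows that, for every $x$, the function $t\mapsto G(x,t)$ can change sign at most $n$ times by construction, and then that the $n+1$ values forced by orthogonality pin the sign). Until you actually carry out that step, the inequality \eqref{satzbrass1a} is not established; you have only reduced it to the correct sub-problem.
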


To apply this result, we have to defined a corresponding distribution and the appropriate family of orthogonal polynomials in the following. First we use the representation of the Hahn polynomials by hypergeometric series: The Hahn polynomials $Q_n\equiv Q_n\left(\cdot;\alpha,\beta,N\right)$ are classical discrete orthogonal polynomials on the interval $I=\left[0,N\right]$ of degree $n$. They can defined by the hypergeometric function as follows:

\begin{definition}[{cf., e.g., \cite[p. 204]{koekoek2010hypergeometric}}]\label{definitionhyphahn}
Let $\alpha,\beta>-1$ and let $N\in\mathbb{N}_0$. The polynomials $Q_n\equiv Q_n\left(\cdot;\alpha,\beta,N\right)$ which are defined by
\begin{align}
\begin{aligned}
Q_n\left(x;\alpha,\beta,N\right)&={}_3 F_2\left(\begin{matrix}-n,\ n+\alpha+\beta+1,\ -x\\ \alpha+1,\ -N\end{matrix};1\right)\\
&=\sum\limits_{k=0}^n{\frac{\left(-n\right)_k\left(n+\alpha+\beta+1\right)_k\left(-x\right)_k}{\left(\alpha+1\right)_k\left(-N\right)_k}\frac{1}{k!}},
\end{aligned}
\end{align}
for each $n=0,\dots,N$, are said to be \textbf{Hahn polynomials}.
\end{definition}

The first Lemma give us a ratio $N/n$ for the boundedness of the Hahn polynomials. Furthermore we can see, that the maximum is on the boundary.

\begin{lemma}\label{lemmahahns1}
Let $\alpha>-\frac{1}{2}$ and let for $N\in\N$
\begin{align*}
n(\alpha,N):=\frac{1}{2}-\alpha+\frac{1}{2}\sqrt{(2\alpha+1)(2\alpha+2N+1)}.
\end{align*}
Then, for any $n\leq n(\alpha,N)$ holds
\begin{align}\label{lemmahahns1a}
\max_{x\in[0,N]}{\left\lvert Q_n(x;\alpha,\alpha,N)\right\rvert}=Q_n(0;\alpha,\alpha,N)=(-1)^nQ_n(N;\alpha,\alpha,N)=1.
\end{align}
\end{lemma}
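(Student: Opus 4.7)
The lemma asserts three facts: the evaluations $Q_n(0;\alpha,\alpha,N)=1$ and $(-1)^n Q_n(N;\alpha,\alpha,N)=1$, and the uniform bound $|Q_n(x;\alpha,\alpha,N)|\leq 1$ on $[0,N]$. I would handle them in that order.

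The first identity is immediate from Definition \ref{definitionhyphahn}: substituting $x=0$, every Pochhammer factor $(-x)_k$ with $k\geq 1$ vanishes, so only the $k=0$ term of the hypergeometric series survives. For the value at $N$ I would exploit the symmetry $\omega(x)=\omega(N-x)$ of the weight in the ultraspherical case $\alpha=\beta$: the polynomial $(-1)^n Q_n(N-\cdot\,;\alpha,\alpha,N)$ then shares both the orthogonality relations and the leading coefficient of $Q_n(\cdot\,;\alpha,\alpha,N)$, and must therefore coincide with it by uniqueness of the orthogonal family; evaluating this reflection identity at $x=0$ yields $(-1)^n Q_n(N;\alpha,\alpha,N)=1$.

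The main effort goes into the uniform bound $|Q_n(x)|\leq 1$ on $[0,N]$. I would argue by contradiction: assume $M^\ast := \max_{x\in[0,N]}|Q_n(x)|>1$. Since $|Q_n(0)|=|Q_n(N)|=1$, the maximum is attained at an interior point $\xi\in(0,N)$, which must be a critical point, so $Q_n'(\xi)=0$ and $Q_n(\xi)\,Q_n''(\xi)\leq 0$. The Hahn polynomials satisfy, as a polynomial identity valid for every real $x$, the second-order difference equation
\begin{align*}
\lambda_n Q_n(x) = B(x)Q_n(x+1) - [B(x)+D(x)]Q_n(x) + D(x)Q_n(x-1),
\end{align*}
with $\lambda_n=n(n+2\alpha+1)$, $B(x)=(x+\alpha+1)(x-N)$ and $D(x)=x(x-\alpha-N-1)$. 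A direct computation in the symmetric case gives $B(x)+D(x)=-\bigl[N(\alpha+1)+2x(N-x)\bigr]$. Expanding $Q_n(\xi\pm 1)$ about $\xi$ by Taylor's theorem and using $Q_n'(\xi)=0$, the equation at $\xi$ collapses to
\begin{align*}
\lambda_n Q_n(\xi) = \tfrac{1}{2}\bigl[B(\xi)+D(\xi)\bigr]Q_n''(\xi) + \text{(corrections in } Q_n^{(3)},Q_n^{(4)},\dots),
\end{align*}
and the sign condition at the extremum together with the bound on $\xi\in(0,N)$ reorganize, after algebra equivalent to completing the square in $n+\alpha-\tfrac12$, into the strict inequality $2(n-1)(n+2\alpha) > N(2\alpha+1)$. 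But this is exactly the negation of the hypothesis $n\leq n(\alpha,N)$ (one checks that $n\leq n(\alpha,N)$ is equivalent to $2(n-1)(n+2\alpha)\leq N(2\alpha+1)$), which is the desired contradiction.

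The hardest part will be the honest treatment of the higher-derivative corrections $Q_n^{(3)},Q_n^{(4)},\dots$ in the Taylor expansion: these are finite because $Q_n$ has degree $n$, but they are not a priori small relative to the $Q_n''$ term. The cleanest route is probably to recast the Hahn difference equation in its self-adjoint Sturm-Liouville form $\nabla[\sigma(x)\rho(x)\Delta Q_n(x)]+\lambda_n\rho(x)Q_n(x)=0$ and to analyse an energy-type auxiliary quantity such as $\Phi(x):=\sigma(x)\rho(x)[\Delta Q_n(x-1)]^2+\lambda_n\rho(x)[Q_n(x)]^2$; a discrete integration by parts then shows that $\Phi$ is monotone on each half of $[0,N]$ under exactly the hypothesis $n\leq n(\alpha,N)$, forcing the maximum of $|Q_n|$ to the endpoints with value $1$.
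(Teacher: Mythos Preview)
The paper's own proof is essentially a citation: it invokes \cite{dette1995new} both for the key inequality $\max_{x\in[0,N]}|Q_n(x)|=\max\{|Q_n(0)|,|Q_n(N)|\}$ under the hypothesis $n\leq n(\alpha,N)$ and for the symmetry relation $Q_n(0)=(-1)^nQ_n(N)$, and then computes $Q_n(0)=1$ from Definition~\ref{definitionhyphahn} exactly as you do. Your proposal differs in that you attempt to prove the uniform bound from scratch rather than appealing to the literature; your handling of the endpoint values is fine (the computation of $Q_n(0)$ is identical to the paper's, and your symmetry argument via uniqueness of the orthogonal family is a standard substitute for the paper's citation), and the algebraic equivalence you state between $n\leq n(\alpha,N)$ and $2(n-1)(n+2\alpha)\leq N(2\alpha+1)$ is correct.

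The gap is in the main step, the bound $|Q_n(x)|\leq 1$ on $[0,N]$. Your first line of attack mixes a \emph{continuous} interior critical point $\xi$ (where $Q_n'(\xi)=0$) with the \emph{discrete} difference equation evaluated at $\xi$ via a Taylor expansion; as you yourself note, the higher-order corrections $Q_n^{(3)}(\xi),Q_n^{(4)}(\xi),\dots$ survive and are in no way negligible, so the displayed inequality $2(n-1)(n+2\alpha)>N(2\alpha+1)$ is asserted but not derived. Your fallback --- the self-adjoint form $\nabla[\sigma\rho\,\Delta Q_n]+\lambda_n\rho\,Q_n=0$ together with a Sonin-type auxiliary function --- is indeed the right idea and is essentially how Dette proceeds, but in your proposal it remains at the level of ``probably'' and ``then shows'': you neither carry out the discrete monotonicity computation nor verify that precisely the condition $n\leq n(\alpha,N)$ is what makes it work. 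As it stands, the proposal is a plan rather than a proof of the hard part, whereas the paper closes that part by direct appeal to the published result.
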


\begin{proof}
It follows directly from \cite{dette1995new}
\begin{align*}
\max_{x\in[0,N]}{\left\lvert Q_n(x;\alpha,\alpha,N)\right\rvert}=\max\left\{\left\lvert Q_n(0;\alpha,\alpha,N)\right\rvert,\left\lvert Q_n(N;\alpha,\alpha,N)\right\rvert\right\}=1.
\end{align*}
Furthermore one has the following symmetries (cf., e.g., \cite{dette1995new}):
\begin{align*}
Q_n(0;\alpha,\alpha,N)=(-1)^nQ_n(N;\alpha,\alpha,N).
\end{align*}
With the definition \ref{definitionhyphahn} of the Hahn polynomials we have the positivity of\linebreak $Q_n\left(0;\alpha,\alpha,N\right)$:
\begin{align*}
Q_n\left(0;\alpha,\alpha,N\right)=\sum\limits_{k=0}^n{\frac{\left(-n\right)_k\left(n+\alpha+\alpha+1\right)_k\left(0\right)_k}{\left(\alpha+1\right)_k\left(-N\right)_k}\frac{1}{k!}}=1.
\end{align*}
\end{proof}

\begin{anmerkung}\label{anmerkunghahn2}
In the following let
\begin{align*}
\hat{Q}_k(x):=\frac{(-1)^kQ_k\left(\frac{N}{2}(1+x);\alpha,\alpha,N\right)}{\sqrt{\left<Q_k(\cdot;\alpha,\alpha,N),Q_k(\cdot;\alpha,\alpha,N)\right>_\omega}}.
\end{align*}
Let $N\in\mathbb{N}$ and let $\alpha=\beta>-\frac{1}{2}$. Furthermore we consider in this section the distribution $\mathrm{d}\sigma$ on $\left[-1,1\right]$, defined by
\begin{align*}
\int\limits_{-1}^1{f(x)\mathrm{d}\sigma(x)}=\sum\limits_{i=0}^N{f\left(-1+\frac{2i}{N}\right)\omega(i)}.
\end{align*}
\end{anmerkung}

In the next Lemma we prove all properties of the polynomials $\hat{Q}_k$ in Lemma \ref{satzbrass1}.

\begin{lemma}\label{lemmadistributions2}
Let $\alpha>-\frac{1}{2}$ and let for $N\in\N$
\begin{align*}
n(\alpha,N):=\frac{1}{2}-\alpha+\frac{1}{2}\sqrt{(2\alpha+1)(2\alpha+2N+1)}.
\end{align*}
Let $N\in\mathbb{N}$ and let $\mathrm{d}\sigma$ be the distribution of Remark \ref{anmerkunghahn2}. Furthermore let\linebreak $\left\{\hat{Q}_k:\ k=0,\dots,N\right\}$ be the family of polynomials, which is defined in Remark \ref{anmerkunghahn2}. Then one has with $n+1\leq n(\alpha,N)$ the following properties:
\begin{enumerate}
  \item $\left\{\hat{Q}_k:\ k=0,\dots,n+1\right\}$ is a family of orthogonal polynomials, which is orthogonal with respect to the inner product $(f,g)_\sigma:=\int_{-1}^1{f(x)g(x)\mathrm{d}\sigma(x)}$.
  \item $\left(\hat{Q}_k,\hat{Q}_k\right)_\sigma=1$.
  \item $\int_{-1}^1{f(x)\mathrm{d}\sigma(x)}=\int_{-1}^1{f(-x)\mathrm{d}\sigma(x)}$ for each $f\in\mathcal{C}\left[-1,1\right]$.
	\item $\sup\limits_{x\in[-1,1]}{\left\lvert \hat{Q}_k(x)\right\rvert}=\hat{Q}_k(1)$ for each $k=0,\dots,n+1$.
\end{enumerate}
\end{lemma}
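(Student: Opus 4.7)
My plan is to treat the four claims one at a time, since each reduces to a direct computation from the definitions together with the boundedness result of Lemma \ref{lemmahahns1}. Throughout, I would exploit the fact that the affine map $x\mapsto\frac{N}{2}(1+x)$ bijectively sends the grid $\{-1+2i/N:i=0,\dots,N\}\subset[-1,1]$ onto $\{0,1,\dots,N\}$, which is precisely the grid on which the Hahn polynomials are discretely orthogonal.

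For parts (1) and (2), the plan is to simply unfold the definition of $(\cdot,\cdot)_\sigma$ from Remark \ref{anmerkunghahn2}. Substituting $x_i=-1+2i/N$ gives $\frac{N}{2}(1+x_i)=i$, so
\begin{align*}
(\hat Q_k,\hat Q_l)_\sigma=\frac{(-1)^{k+l}}{\sqrt{\langle Q_k,Q_k\rangle_\omega\,\langle Q_l,Q_l\rangle_\omega}}\sum_{i=0}^{N}Q_k(i;\alpha,\alpha,N)Q_l(i;\alpha,\alpha,N)\,\omega(i).
\end{align*}
The known discrete orthogonality of the Hahn polynomials on $\{0,\dots,N\}$ with respect to $\omega$ then yields $0$ for $k\neq l$ and $1$ for $k=l$, establishing both statements at once.

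For part (3), the point is that in the symmetric case $\alpha=\beta$ the weight $\omega(i)=\binom{\alpha+i}{i}\binom{\alpha+N-i}{N-i}$ satisfies $\omega(i)=\omega(N-i)$. The plan is to apply the index change $j=N-i$ in the defining sum, noting that $-(-1+2i/N)=-1+2(N-i)/N$, which transforms $\int_{-1}^{1}f(-x)\,\mathrm d\sigma(x)$ into $\sum_{j=0}^{N}f(-1+2j/N)\omega(N-j)$ and, by the palindromic property of $\omega$, back into $\int_{-1}^{1}f(x)\,\mathrm d\sigma(x)$.

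Part (4) is where Lemma \ref{lemmahahns1} enters, and it is the only place that the hypothesis $n+1\leq n(\alpha,N)$ is used. Since each $k\leq n+1\leq n(\alpha,N)$, that lemma asserts $\max_{y\in[0,N]}|Q_k(y;\alpha,\alpha,N)|=Q_k(0;\alpha,\alpha,N)=(-1)^kQ_k(N;\alpha,\alpha,N)=1$. Pulling the affine substitution back to $[-1,1]$, this gives
\begin{align*}
\sup_{x\in[-1,1]}|\hat Q_k(x)|=\frac{1}{\sqrt{\langle Q_k,Q_k\rangle_\omega}},
\end{align*}
while at $x=1$ one has $\hat Q_k(1)=\frac{(-1)^kQ_k(N;\alpha,\alpha,N)}{\sqrt{\langle Q_k,Q_k\rangle_\omega}}=\frac{1}{\sqrt{\langle Q_k,Q_k\rangle_\omega}}$, which matches. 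The mild subtlety—and arguably the only thing to watch—is the role of the sign $(-1)^k$ in the definition of $\hat Q_k$: without it, the supremum would be attained at $x=-1$ for odd $k$, not at $x=1$. I do not foresee a genuine obstacle: all four items are essentially bookkeeping on top of Lemma \ref{lemmahahns1} and the standard discrete orthogonality of the Hahn polynomials.
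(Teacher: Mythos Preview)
Your proposal is correct and follows essentially the same route as the paper: unfold the definition of $(\cdot,\cdot)_\sigma$ and reduce to the Hahn orthogonality relation for parts (1)--(2), use the index flip $i\mapsto N-i$ together with $\omega(i)=\omega(N-i)$ for part (3), and invoke Lemma~\ref{lemmahahns1} under the hypothesis $k\le n+1\le n(\alpha,N)$ for part (4). Your remark on the role of the sign $(-1)^k$ in making the supremum occur at $x=1$ rather than $x=-1$ is a helpful clarification that the paper leaves implicit.
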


\begin{proof}
For each $k,l\in\left\{1,\dots,n+1\right\}$ holds
\begin{align*}
\left(\hat{Q}_k,\hat{Q}_l\right)_\sigma=\int\limits_{-1}^1{\hat{Q}_k(x)\hat{Q}_l(x)\mathrm{d}\sigma(x)}=\sum\limits_{i=0}^N{\hat{Q}_k\left(-1+\frac{2i}{N}\right)\hat{Q}_l\left(-1+\frac{2i}{N}\right)\omega(i)}.
\end{align*}
With the definition of $\left\{\hat{Q}_k:\ k=0,\dots,N\right\}$ one has
\begin{align*}
\left(\hat{Q}_k,\hat{Q}_l\right)_\sigma=\frac{(-1)^{k+l}\left<Q_k,Q_l\right>_\omega}{\sqrt{\left<Q_k,Q_k\right>_\omega\left<Q_l,Q_l\right>_\omega}}.
\end{align*}
We obtain property $1$.\\
With $k=l$ holds $\left(\hat{Q}_k,\hat{Q}_k\right)_\sigma=1$ and we obtain property $2$.\\
Furthermore one has for any $f\in\mathcal{C}\left[-1,1\right]$
\begin{align*}
\int\limits_{-1}^1{f(x)\mathrm{d}\sigma(x)}=\sum\limits_{i=0}^N{f\left(-1+\frac{2i}{N}\right)\binom{\alpha+i}{i}\binom{\alpha+N-i}{N-i}}.
\end{align*}
With the index transformation $i\mapsto N-i$ and the equation
\begin{align*}
-1+\frac{2(N-i)}{N}=1-\frac{2i}{N},
\end{align*}
follows
\begin{align*}
\int\limits_{-1}^1{f(x)\mathrm{d}\sigma(x)}=\sum\limits_{i=0}^N{f\left(1-\frac{2i}{N}\right)\binom{\alpha+N-i}{N-i}\binom{\alpha+i}{i}}=\int\limits_{-1}^1{f(-x)\mathrm{d}\sigma(x)}.
\end{align*}
We obtain property $3$.\\
With Lemma \ref{lemmahahns1} one has for each $k=0,\dots,n+1$
\begin{align*}
\sup\limits_{x\in[-1,1]}{\left\lvert \hat{Q}_k(x)\right\rvert}=\sup\limits_{x\in[0,N]}{\left\lvert Q_k\left(x\right)\right\rvert}\frac{1}{\sqrt{\left<Q_k,Q_k\right>_\omega}}=\frac{(-1)^kQ_k\left(N\right)}{\sqrt{\left<Q_k,Q_k\right>_\omega}}=\hat{Q}_k(1),
\end{align*}
whereby we obtain property $4$.
\end{proof}

Now we can apply Lemma \ref{satzbrass1}. 

\begin{lemma}\label{lemmadistributions3}
Let $\alpha>-\frac{1}{2}$ and let for $N\in\N$
\begin{align*}
n(\alpha,N):=\frac{1}{2}-\alpha+\frac{1}{2}\sqrt{(2\alpha+1)(2\alpha+2N+1)}.
\end{align*}
Let $N\in\mathbb{N}$ and let $\mathrm{d}\sigma$ be the distribution of Remark \ref{anmerkunghahn2}. Furthermore let\linebreak $\left\{\hat{Q}_k:\ k=0,\dots,N\right\}$ be the family of polynomials, which is defined in Remark \ref{anmerkunghahn2}. Let
\begin{align}\label{lemmadistributions3a}
D_{n,N}:=\frac{\sup\limits_{x\in[-1,1]}{\left\lvert \hat{Q}_{n+1}(x)\right\rvert}}{\sup\limits_{x\in[-1,1]}{\left\lvert \hat{Q}_{n+1}^{(n+1)}(x)\right\rvert}}.
\end{align}
Then one has for each $f\in\mathcal{C}^{n+1}\left[-1,1\right]$ with $n+1\leq n(\alpha,N)$
\begin{align}\label{lemmadistributions3b}
\sup\limits_{x\in[-1,1]}{\left\lvert f(x)-\sum\limits_{k=0}^n{\left(f,\hat{Q}_k\right)_\sigma \hat{Q}_k(x)}\right\rvert}\leq D_{n,N}\sup\limits_{x\in[-1,1]}{\left\lvert f^{(n+1)}(x)\right\rvert}.
\end{align}
This estimation is not improvable in this sense, that the constant $D_{n,N}$ in inequality (\ref{lemmadistributions3b}) can not be replaced by a lower value under the above assumptions.
\end{lemma}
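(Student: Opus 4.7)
The plan is to recognise this Lemma as a direct application of Brass's Lemma \ref{satzbrass1} to the specific distribution $\mathrm{d}\sigma$ and orthonormal system $\{\hat{Q}_k\}$ introduced in Remark \ref{anmerkunghahn2}. The entire technical burden has already been discharged by Lemma \ref{lemmadistributions2}, so the proof will essentially consist of checking that the hypotheses of the two Lemmata are in bijection.

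First I would verify the match between hypotheses: under the assumption $n+1\le n(\alpha,N)$, Lemma \ref{lemmadistributions2} yields (i) that $\{\hat{Q}_k:k=0,\dots,n+1\}$ is an orthogonal family with respect to $(\cdot,\cdot)_\sigma$, (ii) the unit normalisation $(\hat{Q}_k,\hat{Q}_k)_\sigma=1$, (iii) the symmetry $\int_{-1}^1 f(x)\,\mathrm{d}\sigma(x)=\int_{-1}^1 f(-x)\,\mathrm{d}\sigma(x)$ for $f\in\mathcal{C}[-1,1]$, and (iv) the extremality $\sup_{x\in[-1,1]}|\hat{Q}_k(x)|=\hat{Q}_k(1)$ for $k=0,\dots,n+1$. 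These are exactly the standing assumptions of Lemma \ref{satzbrass1}. Moreover, the constant $C_n$ appearing in (\ref{satzbrass1a}) coincides, by definition, with the constant $D_{n,N}$ in (\ref{lemmadistributions3a}).

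Next I would invoke Lemma \ref{satzbrass1} with these identifications. This immediately produces
\begin{align*}
\sup\limits_{x\in[-1,1]}\Bigl|f(x)-\sum\limits_{k=0}^n\left(f,\hat{Q}_k\right)_\sigma\hat{Q}_k(x)\Bigr|\leq D_{n,N}\sup\limits_{x\in[-1,1]}\left|f^{(n+1)}(x)\right|
\end{align*}
for every $f\in\mathcal{C}^{n+1}[-1,1]$, which is the desired inequality (\ref{lemmadistributions3b}). The non-improvability assertion is likewise inherited, since Brass's Lemma states the optimality of $C_n$ under precisely the hypotheses we have verified; no separate sharpness argument is required.

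There is no real obstacle here; the only thing one must be careful about is that the orthogonality, unit normalisation and the extremality in property (iv) are needed to hold up to degree $n+1$ (not just $n$), which is exactly why the condition $n+1\le n(\alpha,N)$ is imposed (rather than $n\le n(\alpha,N)$). Once that indexing is observed, the statement follows by citation alone.
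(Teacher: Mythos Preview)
Your proposal is correct and matches the paper's own proof exactly: the paper also simply observes that Lemma \ref{lemmadistributions2} verifies all hypotheses of Brass's Lemma \ref{satzbrass1}, and then invokes the latter for both the estimate and its sharpness. Your remark about needing the properties up to degree $n+1$ is a helpful clarification, but otherwise there is nothing to add.
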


\begin{proof}
We apply Lemma \ref{satzbrass1} and Lemma \ref{lemmadistributions2}. With Lemma \ref{lemmadistributions2} the family of ortho-\linebreak gonal polynomials $\left\{\hat{Q}_k:\ k=0,\dots,N\right\}$ satisfy all the assumptions of Lemma \ref{satzbrass1}. So we can apply Lemma \ref{satzbrass1} and the claim follows.
\end{proof}

In the following Lemma we determine the factor $D_{n,N}$ in equation (\ref{lemmadistributions3a}) of the previous Lemma \ref{lemmadistributions3}.

\begin{lemma}\label{lemmadistributions4}
With the assumptions of Lemma \ref{lemmadistributions3} we have for any $n+1\leq n(\alpha,N)$
\begin{align}\label{lemmadistributions4a}
D_{n,N}=\frac{\sup\limits_{x\in[-1,1]}{\left\lvert \hat{Q}_{n+1}(x)\right\rvert}}{\sup\limits_{x\in[-1,1]}{\left\lvert \hat{Q}_{n+1}^{(n+1)}(x)\right\rvert}}=\frac{2^{n+1}\Gamma\left(n+2\alpha+2\right)\Gamma\left(n+\alpha+2\right)}{(n+1)!\Gamma\left(2n+2\alpha+3\right)\Gamma\left(\alpha+1\right)}\frac{N!}{N^{n+1}(N-n-1)!}.
\end{align}
\end{lemma}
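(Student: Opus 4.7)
The plan is to evaluate the numerator and denominator of $D_{n,N}$ separately and observe that the normalization constant $\sqrt{\langle Q_{n+1},Q_{n+1}\rangle_\omega}$ cancels from the ratio, leaving a purely combinatorial expression that can be rewritten in Gamma notation.

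For the numerator $\sup_{x\in[-1,1]}|\hat{Q}_{n+1}(x)|$, I would simply invoke Lemma \ref{lemmahahns1}: since $n+1\leq n(\alpha,N)$, the Hahn polynomial $Q_{n+1}(\cdot;\alpha,\alpha,N)$ attains its supremum on $[0,N]$ at the endpoints with value $1$. Pulling back along $y=\tfrac{N}{2}(1+x)$ and dividing by the normalizer yields
\begin{align*}
\sup_{x\in[-1,1]}|\hat{Q}_{n+1}(x)|=\frac{1}{\sqrt{\langle Q_{n+1},Q_{n+1}\rangle_\omega}}.
\end{align*}

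For the denominator, the key point is that $\hat{Q}_{n+1}$ is a polynomial of degree exactly $n+1$, so $\hat{Q}_{n+1}^{(n+1)}(x)$ is the constant $(n+1)!\,a_{n+1}$, where $a_{n+1}$ is the leading coefficient of $\hat{Q}_{n+1}$. To compute $a_{n+1}$, I would read off the $y^{n+1}$ coefficient of $Q_{n+1}(y;\alpha,\alpha,N)$ from Definition \ref{definitionhyphahn}: only the $k=n+1$ term of the hypergeometric series contributes, and using $(-(n+1))_{n+1}=(-1)^{n+1}(n+1)!$ together with the leading term $(-1)^{n+1}y^{n+1}$ of $(-y)_{n+1}$, one gets
\begin{align*}
[y^{n+1}]\,Q_{n+1}(y;\alpha,\alpha,N)=\frac{(n+2\alpha+2)_{n+1}}{(\alpha+1)_{n+1}(-N)_{n+1}}.
\end{align*}
Applying the substitution $y=\tfrac{N}{2}(1+x)$ introduces a factor $(N/2)^{n+1}$, and together with the $(-1)^{n+1}$ and the normalizer in the definition of $\hat{Q}_{n+1}$, one obtains a closed form for $\hat{Q}_{n+1}^{(n+1)}$.

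Forming the ratio, the square root normalization cancels and, using $(-N)_{n+1}=(-1)^{n+1}N!/(N-n-1)!$, I arrive at
\begin{align*}
D_{n,N}=\frac{(\alpha+1)_{n+1}}{(n+1)!\,(n+2\alpha+2)_{n+1}}\cdot\frac{2^{n+1}}{N^{n+1}}\cdot\frac{N!}{(N-n-1)!}.
\end{align*}
The final step is cosmetic: translate the two Pochhammer symbols via $(\alpha+1)_{n+1}=\Gamma(n+\alpha+2)/\Gamma(\alpha+1)$ and $(n+2\alpha+2)_{n+1}=\Gamma(2n+2\alpha+3)/\Gamma(n+2\alpha+2)$, which yields exactly the claimed formula (\ref{lemmadistributions4a}).

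There is no real conceptual obstacle here; the argument is essentially bookkeeping once one sees the cancellation of the $L^2_\omega$ normalizer. The only step that needs care is the sign/Gamma manipulation for $(-N)_{n+1}$ and making sure the supremum in the denominator — which one might worry comes from a non-trivial critical point — is in fact just the constant value of the $(n+1)$-th derivative, as guaranteed by the degree of $\hat{Q}_{n+1}$.
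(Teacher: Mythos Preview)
Your proposal is correct and follows essentially the same route as the paper's proof: both invoke Lemma~\ref{lemmahahns1} for the numerator, extract the $(n{+}1)$-th derivative from the top term of the hypergeometric representation in Definition~\ref{definitionhyphahn}, observe the cancellation of the $L^2_\omega$ normalizer, and finish by rewriting $(-N)_{n+1}$, $(\alpha+1)_{n+1}$ and $(n+2\alpha+2)_{n+1}$ in Gamma notation. The only cosmetic difference is that you phrase the derivative step as ``leading coefficient times $(n{+}1)!$'' whereas the paper differentiates $Q_{n+1}$ directly and then applies the chain rule for the $(N/2)^{n+1}$ factor --- the computations are identical.
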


\begin{proof}
First one has for any $x\in[-1,1]$ and any $n+1\leq n(\alpha,N)$
\begin{align*}
\begin{aligned}
\hat{Q}_{n+1}^{(n+1)}(x)&=\frac{(-1)^{n+1}}{\sqrt{\left<Q_{n+1},Q_{n+1}\right>_\omega}}\frac{\mathrm{d}^{n+1}}{\mathrm{d}x^{n+1}}Q_{n+1}\left(\frac{N}{2}(1+x)\right)\\
&=\frac{(-1)^{n+1}}{\sqrt{\left<Q_{n+1},Q_{n+1}\right>_\omega}}\left(\frac{N}{2}\right)^{n+1}Q_{n+1}^{(n+1)}\left(\frac{N}{2}(1+x)\right).
\end{aligned}
\end{align*}
With Lemma \ref{lemmahahns1} holds
\begin{align*}
\sup\limits_{x\in[-1,1]}{\left\lvert \hat{Q}_{n+1}(x)\right\rvert}=\frac{\sup\limits_{x\in[0,N]}{\left\lvert Q_{n+1}(x)\right\rvert}}{\sqrt{\left<Q_{n+1},Q_{n+1}\right>_\omega}}=\frac{1}{\sqrt{\left<Q_{n+1},Q_{n+1}\right>_\omega}}.
\end{align*}
Then one has
\begin{align}\label{lemmadistributions4aa}
\frac{\sup\limits_{x\in[-1,1]}{\left\lvert \hat{Q}_{n+1}(x)\right\rvert}}{\sup\limits_{x\in[-1,1]}{\left\lvert \hat{Q}_{n+1}^{(n+1)}(x)\right\rvert}}=\left(\frac{2}{N}\right)^{n+1}\frac{1}{\sup\limits_{x\in[0,N]}{\left\lvert Q_{n+1}^{(n+1)}(x)\right\rvert}}.
\end{align}
With the representation of the Hahn polynomials in definition \ref{definitionhyphahn} follows
\begin{align*}
Q_{n+1}^{(n+1)}(x)=\frac{\left(-n-1\right)_{n+1}\left(n+2\alpha+2\right)_{n+1}}{\left(\alpha+1\right)_{n+1}\left(-N\right)_{n+1}}\frac{1}{(n+1)!}\frac{\mathrm{d}^{n+1}}{\mathrm{d}x^{n+1}}\left(-x\right)_{n+1}.
\end{align*}
The term $\left(-x\right)_{n+1}$ is a polynomial of degree $n+1$, which we can write in the form
\begin{align*}
\left(-x\right)_{n+1}=(-x)(-x+1)\cdot\ldots\cdot(-x+n)=(-1)^{n+1}x^{n+1}+\tilde{p}(x)
\end{align*}
Hereby is $\tilde{p}\in\mathcal{P}_n$ a polynomial of degree $n$. Hence we have
\begin{align*}
Q_{n+1}^{(n+1)}(x)=\frac{\left(-n-1\right)_{n+1}\left(n+2\alpha+2\right)_{n+1}}{\left(\alpha+1\right)_{n+1}\left(-N\right)_{n+1}}\frac{1}{(n+1)!}(-1)^{n+1}(n+1)!.
\end{align*}
With the transformations
\begin{align*}
\left(-n-1\right)_{n+1}=(-n-1)(-n)\cdot\ldots\cdot(-1)=(-1)^{n+1}(n+1)!
\end{align*}
and
\begin{align*}
\left(-N\right)_{n+1}=(-N)(-N+1)\cdot\ldots\cdot(-N+n)=(-1)^{n+1}\frac{N!}{(N-n-1)!}
\end{align*}
we obtain
\begin{align*}
\begin{aligned}
Q_{n+1}^{(n+1)}(x)&=\frac{(-1)^{n+1}(n+1)!\left(n+2\alpha+2\right)_{n+1}(N-n-1)!}{\left(\alpha+1\right)_{n+1}(-1)^{n+1}N!}(-1)^{n+1}\\
&=\frac{(-1)^{n+1}(n+1)!\Gamma\left(2n+2\alpha+3\right)\Gamma\left(\alpha+1\right)(N-n-1)!}{\Gamma\left(n+2\alpha+2\right)\Gamma\left(n+\alpha+2\right)N!}.
\end{aligned}
\end{align*}
Enter into the equation (\ref{lemmadistributions4aa}),
\begin{align*}
\frac{\sup\limits_{x\in[-1,1]}{\left\lvert \hat{Q}_{n+1}(x)\right\rvert}}{\sup\limits_{x\in[-1,1]}{\left\lvert \hat{Q}_{n+1}^{(n+1)}(x)\right\rvert}}=\left(\frac{2}{N}\right)^{n+1}\frac{\Gamma\left(n+2\alpha+2\right)\Gamma\left(n+\alpha+2\right)N!}{(n+1)!\Gamma\left(2n+2\alpha+3\right)\Gamma\left(\alpha+1\right)(N-n-1)!},
\end{align*}
and it follows equation (\ref{lemmadistributions4a}).
\end{proof}

Now we can prove the main Theorem \ref{theoremgleichmaessigekonvergenz1e} with the aid of the previous lemmata:

\begin{proof}
Let $N\in\mathbb{N}$ and let $f\in\mathcal{C}^{n+1}\left[-1,1\right]$ with $n+1\leq n(\alpha,N)$. Then one has for each $x\in[-1,1]$
\begin{align*}
\sum\limits_{k=0}^n{\left(f,\hat{Q}_k\right)_\sigma \hat{Q}_k(x)}=\sum\limits_{k=0}^n{\frac{\left<f,Q_k\right>_\omega}{\left<Q_k,Q_k\right>_\omega}Q_k\left(\frac{N}{2}(1+x)\right)}.
\end{align*}
Now we apply Lemma \ref{lemmadistributions3} and Lemma \ref{lemmadistributions4} and we obtain
\begin{align*}
\begin{aligned}
&\sup\limits_{x\in[-1,1]}{\left\lvert f(x)-\sum\limits_{k=0}^n{\frac{\left<f,Q_k\right>_\omega}{\left<Q_k,Q_k\right>_\omega}Q_k\left(\frac{N}{2}(1+x)\right)}\right\rvert}\\
\leq&\sup\limits_{x\in[-1,1]}{\left\lvert f^{(n+1)}(x)\right\rvert}\frac{2^{n+1}\Gamma\left(n+2\alpha+2\right)\Gamma\left(n+\alpha+2\right)}{(n+1)!\Gamma\left(2n+2\alpha+3\right)\Gamma\left(\alpha+1\right)}\frac{N!}{N^{n+1}(N-n-1)!}.
\end{aligned}
\end{align*}
We obtain with Lemma \ref{lemmadistributions3}, that the estimation is not improvable.
\end{proof}

\end{section}

\begin{section}{Conclusions}
\label{Conclusions}

In this section we present some results, which we obtain by use of Theorem \ref{theoremgleichmaessigekonvergenz1e}. Especially we discuss some cases, in which we obtain the uniform convergence of the method of least squares. First we investigate the factor $D_{n,N}$ of Theorem \ref{theoremgleichmaessigekonvergenz1e}.

\begin{subsection}{Uniform convergence of the discrete method of least squares}

At the beginning we give the following Lemmata. The $\Gamma$-function satisfy the asymptotic property:

\begin{lemma}[{cf., e.g., \cite[p. 257]{abramowitz1964handbook}}]\label{lemmagamma0}
For $a, b>0$ holds
\begin{align}\label{lemmagamma0a}
N^{b-a}\frac{\Gamma(N+a)}{\Gamma(N+b)}=1+\frac{(a-b)(a+b-1)}{2N}+\mathcal{O}\left(\frac{1}{N^2}\right).
\end{align}
\end{lemma}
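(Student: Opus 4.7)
The plan is to reduce the asymptotic formula to the known expansion of the digamma function and then exponentiate. Writing $F(N):=N^{b-a}\Gamma(N+a)/\Gamma(N+b)$, I take logarithms, so that
\begin{align*}
\log F(N)=(b-a)\log N+\log\Gamma(N+a)-\log\Gamma(N+b).
\end{align*}
The idea is to express the Gamma difference as an integral of the digamma function, using $\frac{d}{dt}\log\Gamma(N+t)=\psi(N+t)$, which yields
\begin{align*}
\log\Gamma(N+a)-\log\Gamma(N+b)=\int\limits_b^a\psi(N+t)\,\mathrm{d}t.
\end{align*}

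Next I would invoke the classical asymptotic expansion $\psi(z)=\log z-\frac{1}{2z}+\mathcal{O}(z^{-2})$ together with $\log(N+t)=\log N+t/N-t^2/(2N^2)+\mathcal{O}(N^{-3})$ and $\frac{1}{2(N+t)}=\frac{1}{2N}+\mathcal{O}(N^{-2})$. Collecting gives $\psi(N+t)=\log N+(t-\tfrac{1}{2})/N+\mathcal{O}(N^{-2})$ uniformly for $t$ in the compact interval with endpoints $a,b$. Integrating from $b$ to $a$ then produces
\begin{align*}
\int\limits_b^a\psi(N+t)\,\mathrm{d}t=(a-b)\log N+\frac{1}{N}\left[\tfrac{t^2}{2}-\tfrac{t}{2}\right]_b^a+\mathcal{O}\!\left(\tfrac{1}{N^2}\right)=(a-b)\log N+\frac{(a-b)(a+b-1)}{2N}+\mathcal{O}\!\left(\tfrac{1}{N^2}\right).
\end{align*}

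Substituting back, the $(a-b)\log N$ term cancels the $(b-a)\log N$ in $\log F(N)$, leaving $\log F(N)=\frac{(a-b)(a+b-1)}{2N}+\mathcal{O}(N^{-2})$. Exponentiating via $\e^x=1+x+\mathcal{O}(x^2)$, which is legitimate because the exponent is $\mathcal{O}(N^{-1})\to0$, gives the claim
\begin{align*}
F(N)=1+\frac{(a-b)(a+b-1)}{2N}+\mathcal{O}\!\left(\tfrac{1}{N^2}\right).
\end{align*}

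The only delicate point is bookkeeping of the error terms: one must check that the remainder in the digamma expansion is uniform in $t$ for $t$ in the compact interval between $a$ and $b$, so that it survives the integration as a uniform $\mathcal{O}(N^{-2})$ term, and similarly that the truncation error in the exponential series does not contaminate the leading correction. An entirely equivalent route would be to substitute Stirling's formula $\log\Gamma(N+c)=(N+c-\tfrac{1}{2})\log(N+c)-(N+c)+\tfrac{1}{2}\log(2\pi)+\mathcal{O}(N^{-1})$ for $c\in\{a,b\}$ and expand the logarithms; this avoids referencing $\psi$ but leads to the same arithmetic verification that the $1/N$-coefficient equals $(a-b)(a+b-1)/2$.
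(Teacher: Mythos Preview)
Your argument is correct: the digamma integral representation combined with the one-term asymptotic $\psi(z)=\log z-\tfrac{1}{2z}+\mathcal{O}(z^{-2})$ gives exactly the $1/N$-coefficient $(a-b)(a+b-1)/2$, and the uniformity remark about $t$ lying in a compact interval is the right way to justify integrating the error term. The exponentiation step is also fine since the exponent is $\mathcal{O}(N^{-1})$.

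As for comparison with the paper: there is nothing to compare. The paper does not prove this lemma at all; it simply quotes the expansion as a standard fact from Abramowitz--Stegun (formula 6.1.47 there, in slightly different notation). So you have supplied a self-contained proof where the author was content to cite. Your digamma-integral approach is clean and arguably preferable to the Stirling alternative you mention at the end, since it keeps the arithmetic minimal: only the antiderivative $\tfrac{t^2}{2}-\tfrac{t}{2}$ needs to be evaluated, whereas the Stirling route requires expanding two copies of $(N+c-\tfrac{1}{2})\log(N+c)$ and tracking several cancellations before the same coefficient emerges.
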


\begin{lemma}\label{lemmagleichkors5}
Let $\alpha>-\frac{1}{2}$. Then one has
\begin{align}
\frac{\Gamma\left(n+2\alpha+2\right)\Gamma\left(n+\alpha+2\right)}{\Gamma\left(2n+2\alpha+3\right)}=\frac{(n+1)!(n+1)!}{(2n+2)!}\frac{n^\alpha}{2^{2\alpha}}\left(1+\mathcal{O}\left(n^{-1}\right)\right).
\end{align}
\end{lemma}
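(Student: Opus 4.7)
The plan is to apply Lemma \ref{lemmagamma0} three times after factoring the left-hand side into ratios of $\Gamma$-functions whose arguments differ by a constant of order $1$. First I would insert the ``missing'' factorials $\Gamma(n+2)=(n+1)!$ and $\Gamma(2n+3)=(2n+2)!$ to rewrite
\[
\frac{\Gamma(n+2\alpha+2)\Gamma(n+\alpha+2)}{\Gamma(2n+2\alpha+3)}
=\frac{(n+1)!\,(n+1)!}{(2n+2)!}\cdot\frac{\Gamma(n+2\alpha+2)}{\Gamma(n+2)}\cdot\frac{\Gamma(n+\alpha+2)}{\Gamma(n+2)}\cdot\frac{\Gamma(2n+3)}{\Gamma(2n+2\alpha+3)}.
\]
This isolates the beta-type quotient $(n+1)!(n+1)!/(2n+2)!$ that already appears on the right-hand side of the claimed identity, leaving only three $\Gamma$-ratios to analyse.

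Second, I would apply Lemma \ref{lemmagamma0} to each of the three remaining ratios. With the choices $(a,b)=(2\alpha+2,2)$ and $(a,b)=(\alpha+2,2)$ applied at $N=n$, and $(a,b)=(3,\,2\alpha+3)$ applied at $N=2n$ (all admissible, since $\alpha>-\tfrac12$ guarantees $a,b>0$), this yields
\[
\frac{\Gamma(n+2\alpha+2)}{\Gamma(n+2)}=n^{2\alpha}\bigl(1+\mathcal{O}(n^{-1})\bigr),\qquad
\frac{\Gamma(n+\alpha+2)}{\Gamma(n+2)}=n^{\alpha}\bigl(1+\mathcal{O}(n^{-1})\bigr),
\]
\[
\frac{\Gamma(2n+3)}{\Gamma(2n+2\alpha+3)}=(2n)^{-2\alpha}\bigl(1+\mathcal{O}(n^{-1})\bigr).
\]

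Third, I would multiply the three asymptotic expressions together. The powers of $n$ combine to $n^{2\alpha}\cdot n^{\alpha}\cdot(2n)^{-2\alpha}=n^{\alpha}\,2^{-2\alpha}$, which is precisely the advertised factor $n^{\alpha}/2^{2\alpha}$, while the three factors $1+\mathcal{O}(n^{-1})$ collapse to a single factor of the same form. Reassembling with the beta-type quotient then gives the stated identity.

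There is no serious obstacle here: the statement is essentially a bookkeeping exercise that turns the ratio of $\Gamma$-values into a product of a central binomial part and a small asymptotic correction. The only point that requires a bit of care is the argument $2n$ in the third ratio; it is what converts the naive power $n^{-2\alpha}$ into $(2n)^{-2\alpha}$, and hence supplies the crucial factor $2^{-2\alpha}$ on the right-hand side.
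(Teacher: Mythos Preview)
Your proof is correct and follows essentially the same strategy as the paper: apply Lemma~\ref{lemmagamma0} to each $\Gamma$-factor separately and collect the resulting powers of $n$. The only cosmetic difference is that the paper reduces to $\Gamma(n+1)$ and $\Gamma(2n+1)$ (obtaining $\tfrac{n!\,n!}{(2n)!}\,\tfrac{n^\alpha}{2^{2\alpha+2}}$ first and then converting to $\tfrac{(n+1)!\,(n+1)!}{(2n+2)!}$ in a final step), whereas you go directly to $\Gamma(n+2)$ and $\Gamma(2n+3)$, which saves that conversion.
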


\begin{proof}
We use Lemma \ref{lemmagamma0} and obtain
\begin{align*}
\begin{aligned}
\frac{\Gamma\left(n+2\alpha+2\right)\Gamma\left(n+\alpha+2\right)}{\Gamma\left(2n+2\alpha+3\right)}&=\frac{\Gamma(n+1)n^{(2\alpha+2)-1}\Gamma(n+1)n^{(\alpha+2)-1}}{\Gamma(2n+1)(2n)^{(2\alpha+3)-1}}\left(1+\mathcal{O}\left(n^{-1}\right)\right)\\
&=\frac{n!n!}{(2n)!}\frac{n^\alpha}{2^{2\alpha+2}}\left(1+\mathcal{O}\left(n^{-1}\right)\right)\\
&=\frac{(n+1)!(n+1)!}{(2n+2)!}\frac{n^\alpha}{2^{2\alpha}}\left(1+\mathcal{O}\left(n^{-1}\right)\right).
\end{aligned}
\end{align*}
\end{proof}

\begin{lemma}\label{lemmagleichkors6}
One has
\begin{align}
\frac{\sqrt{\pi n}}{2^nn!}\e^{\frac{2}{12n+1}-\frac{1}{24n}}\leq\frac{2^nn!}{(2n)!}\leq\frac{\sqrt{\pi n}}{2^nn!}\e^{\frac{1}{6n}-\frac{1}{24n+1}}.
\end{align}
\end{lemma}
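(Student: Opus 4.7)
The plan is to reduce the claim to a two-sided estimate of the central quantity $\dfrac{4^n(n!)^2}{(2n)!}$ and then apply the sharp form of Stirling's formula due to Robbins,
\begin{align*}
\sqrt{2\pi n}\left(\tfrac{n}{\e}\right)^n\e^{\frac{1}{12n+1}}<n!<\sqrt{2\pi n}\left(\tfrac{n}{\e}\right)^n\e^{\frac{1}{12n}}.
\end{align*}

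First I would multiply through the desired inequality by $2^nn!$ to rewrite it equivalently as
\begin{align*}
\sqrt{\pi n}\,\e^{\frac{2}{12n+1}-\frac{1}{24n}}\leq\frac{(2^nn!)^2}{(2n)!}=\frac{4^n(n!)^2}{(2n)!}\leq\sqrt{\pi n}\,\e^{\frac{1}{6n}-\frac{1}{24n+1}}.
\end{align*}
This is the standard sharp asymptotic for the inverse of the central binomial coefficient divided by $4^n$, since $\frac{4^n(n!)^2}{(2n)!}=\frac{4^n}{\binom{2n}{n}}\sim\sqrt{\pi n}$.

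Next I would write $n!=\sqrt{2\pi n}(n/\e)^n\e^{\theta_n}$ and $(2n)!=\sqrt{4\pi n}(2n/\e)^{2n}\e^{\theta_{2n}}$ with the remainders satisfying $\frac{1}{12n+1}<\theta_n<\frac{1}{12n}$ and $\frac{1}{24n+1}<\theta_{2n}<\frac{1}{24n}$, and substitute. The algebraic part is routine: the prefactor $4^n n^{2n}$ cancels against $(2n)^{2n}$, and the Gaussian factors combine as $\frac{2\pi n}{\sqrt{4\pi n}}=\sqrt{\pi n}$. One obtains the exact identity
\begin{align*}
\frac{4^n(n!)^2}{(2n)!}=\sqrt{\pi n}\,\e^{2\theta_n-\theta_{2n}}.
\end{align*}

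Finally I would bound the exponent two-sidedly using Robbins:
\begin{align*}
\tfrac{2}{12n+1}-\tfrac{1}{24n}<2\theta_n-\theta_{2n}<\tfrac{2}{12n}-\tfrac{1}{24n+1}=\tfrac{1}{6n}-\tfrac{1}{24n+1},
\end{align*}
and dividing back by $2^nn!$ yields exactly the claimed inequality. The only step requiring any care is to keep track of which bound on $\theta_n$ pairs with which bound on $\theta_{2n}$ so that the signs add in the correct direction when forming $2\theta_n-\theta_{2n}$; everything else is bookkeeping, and I do not expect any genuine obstacle.
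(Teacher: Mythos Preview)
Your proposal is correct and follows essentially the same approach as the paper: both arguments rest entirely on the Robbins form of Stirling's inequality applied to $n!$ and $(2n)!$. The only cosmetic difference is that the paper applies Stirling in two passes (first to $n!$ and $(2n)!$, then once more to convert the leftover $\sqrt{2\pi n}(n/\e)^n$ back into $n!$), whereas you first multiply through by $2^n n!$ and work with $\dfrac{4^n(n!)^2}{(2n)!}=\sqrt{\pi n}\,\e^{2\theta_n-\theta_{2n}}$ in one step; this is a slightly cleaner organization of the same computation.
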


\begin{proof}
We prove both inequalities successively. For that we use the Stirling's formula (cf., e.g., \cite[p. 50-53]{feller1968introduction}, \cite{robbins1955remark})
\begin{align*}
\e^{\frac{1}{12n+1}}\leq\frac{n!}{\sqrt{2\pi n}\left(\frac{n}{\e}\right)^n}\leq \e^{\frac{1}{12n}}.
\end{align*}
First we show the left inequality. With the aid of the Stirling's formula we have
\begin{align*}
\begin{aligned}
\frac{2^nn!}{(2n)!}&\geq 2^n\frac{\sqrt{2\pi n}\left(\frac{n}{\e}\right)^n}{\sqrt{2\pi (2n)}\left(\frac{2n}{\e}\right)^{2n}}\frac{\e^{\frac{1}{12n+1}}}{\e^{\frac{1}{24n}}}\\
&=\frac{\sqrt{\pi n}}{2^n}\frac{1}{\sqrt{2\pi n}\left(\frac{n}{\e}\right)^n}\frac{\e^{\frac{1}{12n+1}}}{\e^{\frac{1}{24n}}}.
\end{aligned}
\end{align*}
We use one more time the Stirling's formula
\begin{align*}
\frac{2^nn!}{(2n)!}\geq\frac{\sqrt{\pi n}}{2^n}\frac{1}{n!}\frac{\e^{\frac{1}{12n+1}}\e^{\frac{1}{12n+1}}}{\e^{\frac{1}{24n}}}=\frac{\sqrt{\pi n}}{2^nn!}\e^{\frac{2}{12n+1}-\frac{1}{24n}}.
\end{align*}
We obtain the left inequality. For the right inequality we have by use of the Stirling's formula again
\begin{align*}
\begin{aligned}
\frac{2^nn!}{(2n)!}&\leq 2^n\frac{\sqrt{2\pi n}\left(\frac{n}{\e}\right)^n}{\sqrt{2\pi (2n)}\left(\frac{2n}{\e}\right)^{2n}}\frac{\e^{\frac{1}{12n}}}{\e^{\frac{1}{24n+1}}}\\
&=\frac{\sqrt{\pi n}}{2^n}\frac{1}{\sqrt{2\pi n}\left(\frac{n}{\e}\right)^n}\frac{\e^{\frac{1}{12n}}}{\e^{\frac{1}{24n+1}}}.
\end{aligned}
\end{align*}
We use one more time the Stirling's formula
\begin{align*}
\frac{2^nn!}{(2n)!}\leq\frac{\sqrt{\pi n}}{2^n}\frac{1}{n!}\frac{\e^{\frac{1}{12n}}\e^{\frac{1}{12n}}}{\e^{\frac{1}{24n+1}}}=\frac{\sqrt{\pi n}}{2^nn!}\e^{\frac{1}{6n}-\frac{1}{24n+1}}.
\end{align*}
We obtain the right inequality.
\end{proof}

\begin{anmerkung}\label{anmerkunggleichkors6a}
With Lemma \ref{lemmagleichkors6} we have
\begin{align}
\frac{2^nn!}{(2n)!}=\frac{\sqrt{\pi n}}{2^nn!}\left(1+\mathcal{O}\left(n^{-1}\right)\right).
\end{align}
\end{anmerkung}

Now we can simplify the estimation in Theorem \ref{theoremgleichmaessigekonvergenz1e}.

\begin{korollar}\label{korollargleichmaessig1}
Let $\alpha>-\frac{1}{2}$ and let for $N\in\N$
\begin{align*}
n(\alpha,N):=\frac{1}{2}-\alpha+\frac{1}{2}\sqrt{(2\alpha+1)(2\alpha+2N+1)}.
\end{align*}
Then one has for each $f\in\mathcal{C}^{n+1}\left[-1,1\right]$
\begin{align}\label{korollargleichmaessig1a}
\begin{aligned}
&\sup\limits_{x\in[-1,1]}{\left\lvert f(x)-\sum\limits_{k=0}^n{\frac{\left<f,Q_k\right>_\omega}{\left<Q_k,Q_k\right>_\omega}Q_k\left(\frac{N}{2}(1+x)\right)}\right\rvert}\\
\leq&\sup\limits_{x\in[-1,1]}{\left\lvert f^{(n+1)}(x)\right\rvert}\frac{\sqrt{\pi n}}{2^{n+1}(n+1)!}\cdot\frac{n^\alpha}{\Gamma\left(\alpha+1\right)2^{2\alpha}}\left(1+\mathcal{O}\left(n^{-1}\right)\right),
\end{aligned}
\end{align}
with $n+1\leq n(\alpha,N)$.
\end{korollar}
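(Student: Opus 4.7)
The plan is to start from the sharp bound in Theorem \ref{theoremgleichmaessigekonvergenz1e}, namely
\[
D_{n,N}=\frac{2^{n+1}\Gamma(n+2\alpha+2)\Gamma(n+\alpha+2)}{(n+1)!\,\Gamma(2n+2\alpha+3)\,\Gamma(\alpha+1)}\cdot\frac{N!}{N^{n+1}(N-n-1)!},
\]
and simplify the two $n$-dependent pieces separately using the asymptotic lemmata established in this subsection. The idea is to pull the Gamma ratio into factorial form via Lemma \ref{lemmagleichkors5}, then convert the remaining factorial block into the compact $\sqrt{\pi n}/(2^{n+1}(n+1)!)$ form with the help of Remark \ref{anmerkunggleichkors6a}, and finally deal with the $N$-dependent factor by a trivial monotonicity bound.

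First I would apply Lemma \ref{lemmagleichkors5} directly to the quotient of Gamma factors, yielding
\[
\frac{\Gamma(n+2\alpha+2)\Gamma(n+\alpha+2)}{\Gamma(2n+2\alpha+3)}=\frac{(n+1)!(n+1)!}{(2n+2)!}\cdot\frac{n^\alpha}{2^{2\alpha}}\bigl(1+\mathcal{O}(n^{-1})\bigr).
\]
Substituting this into $D_{n,N}$ and cancelling one $(n+1)!$ gives
\[
D_{n,N}=\frac{2^{n+1}(n+1)!}{(2n+2)!\,\Gamma(\alpha+1)}\cdot\frac{n^\alpha}{2^{2\alpha}}\bigl(1+\mathcal{O}(n^{-1})\bigr)\cdot\frac{N!}{N^{n+1}(N-n-1)!}.
\]

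Next I would reindex by setting $m=n+1$, so that the first factor becomes $\frac{2^{m}m!}{(2m)!}$. Remark \ref{anmerkunggleichkors6a} applied with $m$ in place of $n$ then gives $\frac{2^{n+1}(n+1)!}{(2n+2)!}=\frac{\sqrt{\pi(n+1)}}{2^{n+1}(n+1)!}\bigl(1+\mathcal{O}(n^{-1})\bigr)$. Since $\sqrt{n+1}=\sqrt{n}\bigl(1+\mathcal{O}(n^{-1})\bigr)$, absorbing this into the existing $\mathcal{O}(n^{-1})$ term yields
\[
D_{n,N}=\frac{\sqrt{\pi n}}{2^{n+1}(n+1)!}\cdot\frac{n^\alpha}{\Gamma(\alpha+1)\,2^{2\alpha}}\bigl(1+\mathcal{O}(n^{-1})\bigr)\cdot\frac{N!}{N^{n+1}(N-n-1)!}.
\]

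Finally, one observes that
\[
\frac{N!}{N^{n+1}(N-n-1)!}=\prod_{k=0}^{n}\left(1-\frac{k}{N}\right)\leq 1,
\]
so this factor can simply be dropped in the upper bound, giving precisely inequality (\ref{korollargleichmaessig1a}). The statement then follows by combining this estimate with the main inequality in Theorem \ref{theoremgleichmaessigekonvergenz1e}. No real obstacle is expected: the only point requiring a moment of care is the index shift $n\leadsto n+1$ when invoking Remark \ref{anmerkunggleichkors6a}, and verifying that the associated $\sqrt{(n+1)/n}$ correction is harmlessly absorbed into the $(1+\mathcal{O}(n^{-1}))$ term.
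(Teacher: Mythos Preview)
Your proposal is correct and follows essentially the same route as the paper: bound the $N$-dependent factor $\prod_{i=0}^n(1-i/N)\leq 1$, apply Lemma~\ref{lemmagleichkors5} to the Gamma ratio, and then Remark~\ref{anmerkunggleichkors6a} (with the index shift $n\mapsto n+1$) to obtain the $\sqrt{\pi n}/(2^{n+1}(n+1)!)$ form, finally invoking Theorem~\ref{theoremgleichmaessigekonvergenz1e}. The only cosmetic differences are the order in which you treat the $N$-factor and that you make the absorption of $\sqrt{(n+1)/n}$ into the $\mathcal{O}(n^{-1})$ term explicit, which the paper leaves implicit.
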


\begin{proof}
Let $N\in\mathbb{N}$ and let $f\in\mathcal{C}^{n+1}\left[-1,1\right]$ with $n+1\leq n(\alpha,N)$. First we have
\begin{align*}
\frac{N!}{N^{n+1}(N-n-1)!}=\prod_{i=0}^n{\left(1-\frac{i}{N}\right)}\leq1.
\end{align*}
With Lemma \ref{lemmagleichkors5} we obtain
\begin{align*}
\begin{aligned}
&\frac{2^{n+1}\Gamma\left(n+2\alpha+2\right)\Gamma\left(n+\alpha+2\right)}{(n+1)!\Gamma\left(2n+2\alpha+3\right)\Gamma\left(\alpha+1\right)}\frac{N!}{N^{n+1}(N-n-1)!}\\
\leq&\frac{2^{n+1}\Gamma\left(n+2\alpha+2\right)\Gamma\left(n+\alpha+2\right)}{(n+1)!\Gamma\left(2n+2\alpha+3\right)\Gamma\left(\alpha+1\right)}\\
=&\frac{2^{n+1}}{(n+1)!\Gamma\left(\alpha+1\right)}\frac{(n+1)!(n+1)!}{(2n+2)!}\frac{n^\alpha}{2^{2\alpha}}\left(1+\mathcal{O}\left(n^{-1}\right)\right)\\
=&\frac{2^{n+1}(n+1)!}{(2n+2)!}\frac{n^\alpha}{\Gamma\left(\alpha+1\right)2^{2\alpha}}\left(1+\mathcal{O}\left(n^{-1}\right)\right).
\end{aligned}
\end{align*}
We apply Remark \ref{anmerkunggleichkors6a} and obtain
\begin{align*}
\begin{aligned}
&\frac{2^{n+1}\Gamma\left(n+2\alpha+2\right)\Gamma\left(n+\alpha+2\right)}{(n+1)!\Gamma\left(2n+2\alpha+3\right)\Gamma\left(\alpha+1\right)}\frac{N!}{N^{n+1}(N-n-1)!}\\
\leq&\frac{\sqrt{\pi (n+1)}}{2^{n+1}(n+1)!}\frac{n^\alpha}{\Gamma\left(\alpha+1\right)2^{2\alpha}}\left(1+\mathcal{O}\left(n^{-1}\right)\right).
\end{aligned}
\end{align*}
We use Theorem \ref{theoremgleichmaessigekonvergenz1e}, then we obtain the inequality (\ref{korollargleichmaessig1a}).
\end{proof}

For the important case $\alpha=0$ we complement the following estimation.

\begin{korollar}\label{korollargleichmaessig2}
Let for $N\in\N$
\begin{align*}
n(N):=\frac{1}{2}+\frac{1}{2}\sqrt{2N+1}.
\end{align*}
Furthermore let
\begin{align*}
D_n:=\frac{\sqrt{\pi (n+1)}}{2^{n+1}(n+1)!}\e^{\frac{1}{6(n+1)}-\frac{1}{24(n+1)+1}}.
\end{align*}
Then one has for each $f\in\mathcal{C}^{n+1}\left[-1,1\right]$ and for any $N\in\mathbb{N}$ with $n+1\leq n(N)$
\begin{align}\label{korollargleichmaessig2a}
\sup\limits_{x\in[-1,1]}{\left\lvert f(x)-\sum\limits_{k=0}^n{\frac{\left<f,Q_k\right>_\omega}{\left<Q_k,Q_k\right>_\omega}Q_k\left(\frac{N}{2}(1+x)\right)}\right\rvert}\leq D_n\sup\limits_{x\in[-1,1]}{\left\lvert f^{(n+1)}(x)\right\rvert}.
\end{align}
Under the above assumptions is the constant $D_n$ in inequality (\ref{korollargleichmaessig2a}) improvable at most by the factor
\begin{align*}
d_n:=\e^{\frac{2}{12(n+1)+1}+\frac{1}{24(n+1)+1}-\frac{1}{6(n+1)}-\frac{1}{24(n+1)}}\approx 1.
\end{align*}
\end{korollar}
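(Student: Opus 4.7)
The plan is to specialize Theorem \ref{theoremgleichmaessigekonvergenz1e} to $\alpha=0$ and then replace the resulting closed-form constant by a cleaner Stirling-type expression via Lemma \ref{lemmagleichkors6}. First I would observe that $n(0,N)=\frac{1}{2}+\frac{1}{2}\sqrt{2N+1}=n(N)$, so the admissible range of $n$ matches. Substituting $\alpha=0$ collapses the $\Gamma$-factors of $D_{n,N}$ to factorials (using $\Gamma(n+2)=(n+1)!$, $\Gamma(2n+3)=(2n+2)!$ and $\Gamma(1)=1$), giving
\begin{align*}
D_{n,N}=\frac{2^{n+1}(n+1)!}{(2n+2)!}\cdot\frac{N!}{N^{n+1}(N-n-1)!}.
\end{align*}

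Next I would use the identity $\frac{N!}{N^{n+1}(N-n-1)!}=\prod_{i=0}^n(1-i/N)\leq 1$, exactly as in the proof of Corollary \ref{korollargleichmaessig1}. Applying the right-hand Stirling estimate of Lemma \ref{lemmagleichkors6} with $n$ replaced by $n+1$ then yields
\begin{align*}
\frac{2^{n+1}(n+1)!}{(2n+2)!}\leq\frac{\sqrt{\pi(n+1)}}{2^{n+1}(n+1)!}\e^{\frac{1}{6(n+1)}-\frac{1}{24(n+1)+1}}=D_n,
\end{align*}
and combining this with Theorem \ref{theoremgleichmaessigekonvergenz1e} produces inequality (\ref{korollargleichmaessig2a}).

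For the sharpness-up-to-$d_n$ assertion I would invoke the non-improvability part of Theorem \ref{theoremgleichmaessigekonvergenz1e}: for each fixed pair $(n,N)$ obeying $n+1\leq n(N)$ the minimal admissible constant in (\ref{korollargleichmaessig2a}) equals $D_{n,N}$, and letting $N\to\infty$ with $n$ fixed gives $D_{n,N}\to\frac{2^{n+1}(n+1)!}{(2n+2)!}$. The left-hand Stirling estimate of Lemma \ref{lemmagleichkors6}, again with $n\mapsto n+1$, delivers
\begin{align*}
\frac{2^{n+1}(n+1)!}{(2n+2)!}\geq\frac{\sqrt{\pi(n+1)}}{2^{n+1}(n+1)!}\e^{\frac{2}{12(n+1)+1}-\frac{1}{24(n+1)}},
\end{align*}
and a short computation shows the right-hand side equals $D_n\cdot d_n$ with $d_n$ as defined in the statement. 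Consequently no constant strictly smaller than $D_n d_n$ is admissible under the given assumptions, so $D_n$ is improvable at most by the factor $d_n$. The whole argument is essentially algebraic bookkeeping; I expect the only care-requiring point to be verifying that the cancellation of Stirling exponents produces exactly the expression prescribed for $d_n$, and that the hypothesis $n+1\leq n(N)$ enters only to legitimize the invocation of Theorem \ref{theoremgleichmaessigekonvergenz1e}, the Stirling step itself being unconditional.
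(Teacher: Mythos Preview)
Your proposal is correct and follows essentially the same route as the paper: specialize Theorem \ref{theoremgleichmaessigekonvergenz1e} to $\alpha=0$, bound the product $\prod_{i=0}^n(1-i/N)$ by $1$, apply the upper Stirling bound of Lemma \ref{lemmagleichkors6} with $n\mapsto n+1$ to obtain $D_n$, and then combine the non-improvability of Theorem \ref{theoremgleichmaessigekonvergenz1e} with the limit $N\to\infty$ and the lower Stirling bound to extract the factor $d_n$. Your write-up is in fact slightly more explicit than the paper's about why $d_n$ has exactly the displayed form.
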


\begin{proof}
Let $N\in\mathbb{N}$ and let $f\in\mathcal{C}^{n+1}\left[-1,1\right]$ with $n+1\leq n(N)$. For $\alpha=0$ reduce the Theorem \ref{theoremgleichmaessigekonvergenz1e} to
\begin{align*}
\begin{aligned}
&\sup\limits_{x\in[-1,1]}{\left\lvert f(x)-\sum\limits_{k=0}^n{\frac{\left<f,Q_k\right>_\omega}{\left<Q_k,Q_k\right>_\omega}Q_k\left(\frac{N}{2}(1+x)\right)}\right\rvert}\\
\leq&\sup\limits_{x\in[-1,1]}{\left\lvert f^{(n+1)}(x)\right\rvert}\frac{2^{n+1}\Gamma\left(n+2\right)\Gamma\left(n+2\right)}{(n+1)!\Gamma\left(2n+3\right)\Gamma\left(1\right)}\frac{N!}{N^{n+1}(N-n-1)!}\\
=&\sup\limits_{x\in[-1,1]}{\left\lvert f^{(n+1)}(x)\right\rvert}\frac{2^{n+1}(n+1)!}{(2n+2)!}\frac{N!}{N^{n+1}(N-n-1)!}.
\end{aligned}
\end{align*}
With the estimation
\begin{align*}
\frac{N!}{N^{n+1}(N-n-1)!}=\prod_{i=0}^n{\left(1-\frac{i}{N}\right)}\leq1,
\end{align*}
we obtain
\begin{align*}
\begin{aligned}
&\sup\limits_{x\in[-1,1]}{\left\lvert f(x)-\sum\limits_{k=0}^n{\frac{\left<f,Q_k\right>_\omega}{\left<Q_k,Q_k\right>_\omega}Q_k\left(\frac{N}{2}(1+x)\right)}\right\rvert}\\
\leq&\sup\limits_{x\in[-1,1]}{\left\lvert f^{(n+1)}(x)\right\rvert}\frac{2^{n+1}(n+1)!}{(2n+2)!}.
\end{aligned}
\end{align*}
This estimation is for any $N$ not improvable because of
\begin{align*}
\frac{N!}{N^{n+1}(N-n-1)!}=\prod_{i=0}^n{\left(1-\frac{i}{N}\right)}\rightarrow1,\ \mbox{where}\ N\to\infty.
\end{align*}
With Lemma \ref{lemmagleichkors6} we have
\begin{align*}
\begin{aligned}
&\sup\limits_{x\in[-1,1]}{\left\lvert f(x)-\sum\limits_{k=0}^n{\frac{\left<f,Q_k\right>_\omega}{\left<Q_k,Q_k\right>_\omega}Q_k\left(\frac{N}{2}(1+x)\right)}\right\rvert}\\
\leq&\sup\limits_{x\in[-1,1]}{\left\lvert f^{(n+1)}(x)\right\rvert}\frac{\sqrt{\pi (n+1)}}{2^{n+1}(n+1)!}\e^{\frac{1}{6(n+1)}-\frac{1}{24(n+1)+1}}
\end{aligned}
\end{align*}
and we have that the inequality is not improvable except for the factor
\begin{align*}
d_n=\e^{\frac{2}{12(n+1)+1}+\frac{1}{24(n+1)+1}-\frac{1}{6(n+1)}-\frac{1}{24(n+1)}}.
\end{align*}
\end{proof}

With this Corollary \ref{korollargleichmaessig2} we can give a special and interest answer to our initially question:\\
For which classes of functions $K\subset\mathcal{C}\left[-1,1\right]$ and which ratio $N/n$ converges the method of least squares $\left({LS}_n^N\right)$ uniformly?

\begin{korollar}\label{korollargleichmaessig3}
Let $\alpha>-\frac{1}{2}$, let
\begin{align*}
f\in K:=\left\{g\in\mathcal{C}^\infty\left[-1,1\right]:\ \lim_{n\to\infty}{\sup\limits_{x\in[-1,1]}{\left\lvert g^{(n)}(x)\right\rvert}\frac{n^{\alpha+\frac{1}{2}}}{2^nn!}}=0\right\}
\end{align*}
and let $(N_n)_{n\in\mathbb{N}}$ be a sequence with 
\begin{align*}
N_n\geq\frac{2n^2+\left(4\alpha+2\right)n}{2\alpha+1}.
\end{align*}
Then the method of least squares ${LS}_n^{N_n}[f]$ converges uniform on the interval $[-1,1]$.
\end{korollar}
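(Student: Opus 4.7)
The plan is to invoke Corollary~\ref{korollargleichmaessig1} along the sequence $(N_n)$ and then read off convergence directly from the defining condition of the class $K$. The work splits cleanly into an algebraic preliminary and an asymptotic comparison.

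\textbf{Step 1: Translating the hypothesis on $N_n$.} I would first verify that the growth condition on $N_n$ is equivalent to the threshold $n+1 \leq n(\alpha, N_n)$ which activates Corollary~\ref{korollargleichmaessig1}. Written out, this threshold reads
\[
2n + 2\alpha + 1 \leq \sqrt{(2\alpha+1)(2\alpha+2N_n+1)}.
\]
Both sides are positive for $\alpha > -\tfrac{1}{2}$, so squaring is an equivalence. Using the factorisation $(2n+2\alpha+1)^2 - (2\alpha+1)^2 = 2n(2n+4\alpha+2)$, the squared inequality collapses precisely to $N_n(2\alpha+1) \geq 2n^2 + (4\alpha+2)n$, which is the assumed hypothesis. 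Hence Corollary~\ref{korollargleichmaessig1} is applicable at every index.

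\textbf{Step 2: Matching the error bound to the class $K$.} Applying (\ref{korollargleichmaessig1a}) and collecting $\sqrt{\pi n}\cdot n^\alpha = \sqrt{\pi}\,n^{\alpha+1/2}$, one obtains
\[
\bigl\|f - {LS}_n^{N_n}[f]\bigr\|_\infty \leq C_\alpha \sup_{x\in[-1,1]} \bigl|f^{(n+1)}(x)\bigr| \cdot \frac{n^{\alpha+1/2}}{2^{n}(n+1)!} \bigl(1 + \mathcal{O}(n^{-1})\bigr),
\]
where $C_\alpha := \sqrt{\pi}\bigl/\bigl(2^{2\alpha+1}\Gamma(\alpha+1)\bigr)$. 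Since $\alpha+\tfrac{1}{2}>0$ implies $n^{\alpha+1/2} \leq (n+1)^{\alpha+1/2}$, the right-hand side is, up to the bounded $\mathcal{O}(n^{-1})$-factor, dominated by
\[
2C_\alpha \sup_{x\in[-1,1]} \bigl|f^{(n+1)}(x)\bigr| \cdot \frac{(n+1)^{\alpha+1/2}}{2^{n+1}(n+1)!}.
\]
This last quantity is exactly the sequence from the definition of $K$ evaluated at index $n+1$, which tends to zero by assumption. Therefore ${LS}_n^{N_n}[f] \to f$ uniformly on $[-1,1]$.

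\textbf{Main obstacle.} There is no substantive obstacle once Corollary~\ref{korollargleichmaessig1} is in hand; the proof is essentially bookkeeping. The only place demanding care is the algebraic equivalence in Step~1, which must be verified so that the hypothesis on $N_n$ is indeed the one switching on Corollary~\ref{korollargleichmaessig1}. Everything else consists in matching the explicit factor $n^{\alpha+1/2}/(2^n n!)$ appearing in the error bound with the weight used to define $K$.
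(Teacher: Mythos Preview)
Your proof is correct and follows essentially the same route as the paper: verify that the lower bound on $N_n$ is algebraically equivalent to $n+1\le n(\alpha,N_n)$, apply Corollary~\ref{korollargleichmaessig1}, and identify the resulting bound with the weight $n^{\alpha+1/2}/(2^n n!)$ defining $K$. Your handling of the index shift via $n^{\alpha+1/2}\le(n+1)^{\alpha+1/2}$ is in fact slightly more explicit than the paper's, which simply re-indexes inside the limit.
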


\begin{proof}
First one has
\begin{align*}
N_n\geq\frac{2n^2+\left(4\alpha+2\right)n}{2\alpha+1}=\frac{2\left(n+\frac{1}{2}+\alpha\right)^2}{2\alpha+1}-\frac{2\alpha+1}{2}.
\end{align*}
With simple transformations holds
\begin{align*}
\left(2\alpha+1\right)\left(2\alpha+2N_n+1\right)\geq4\left(n+\frac{1}{2}+\alpha\right)^2.
\end{align*}
We transform again and obtain
\begin{align*}
n\left(\alpha,N_n\right):=\frac{1}{2}-\alpha+\frac{1}{2}\sqrt{\left(2\alpha+1\right)\left(2\alpha+2N_n+1\right)}\geq n+1.
\end{align*}
Now we can use Corollary \ref{korollargleichmaessig1}:
\begin{align*}
\begin{aligned}
&\lim\limits_{n\to\infty}{\sup\limits_{x\in[-1,1]}{\left\lvert f(x)-\sum\limits_{k=0}^n{\frac{\left<f,Q_k\right>_\omega}{\left<Q_k,Q_k\right>_\omega}Q_k\left(\frac{N_n}{2}(1+x)\right)}\right\rvert}}\\
\leq&\lim\limits_{n\to\infty}{\sup\limits_{x\in[-1,1]}{\left\lvert f^{(n+1)}(x)\right\rvert}\frac{\sqrt{\pi n}}{2^{n+1}(n+1)!}\cdot\frac{n^\alpha}{\Gamma\left(\alpha+1\right)2^{2\alpha}}\left(1+\mathcal{O}\left(n^{-1}\right)\right)}\\
\leq&\frac{\sqrt{\pi}}{\Gamma\left(\alpha+1\right)2^{2\alpha}}\lim\limits_{n\to\infty}{\sup\limits_{x\in[-1,1]}{\left\lvert f^{(n)}(x)\right\rvert}\frac{n^{\alpha+\frac{1}{2}}}{2^nn!}}.
\end{aligned}
\end{align*}
Because of $f\in K$ one has
\begin{align*}
\lim\limits_{n\to\infty}{\sup\limits_{x\in[-1,1]}{\left\lvert f(x)-\sum\limits_{k=0}^n{\frac{\left<f,Q_k\right>_\omega}{\left<Q_k,Q_k\right>_\omega}Q_k\left(\frac{N_n}{2}(1+x)\right)}\right\rvert}}=0.
\end{align*}
\end{proof}

Concerning the above question we can easily give a sequence $(N_n)_{n\in\mathbb{N}}$ independent of $\alpha$:

\begin{korollar}\label{korollargleichmaessig4}
Let $\alpha\geq0$, let
\begin{align*}
f\in K:=\left\{g\in\mathcal{C}^\infty\left[-1,1\right]:\ \lim_{n\to\infty}{\sup\limits_{x\in[-1,1]}{\left\lvert g^{(n)}(x)\right\rvert}\frac{n^{\alpha+\frac{1}{2}}}{2^nn!}}=0\right\}
\end{align*}
and let $(N_n)_{n\in\mathbb{N}}$ be a sequence with $N_n\geq2n(n+1)$. Then the method of least squares ${LS}_n^{N_n}[f]$ converges uniform on the interval $[-1,1]$.
\end{korollar}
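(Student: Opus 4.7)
The plan is to deduce this statement directly from Corollary \ref{korollargleichmaessig3}, since the two results share the same function class $K$ and the same conclusion. Nothing about the Hahn-expansion error estimate needs to be reproved; it suffices to verify that the simpler, $\alpha$-free hypothesis $N_n \geq 2n(n+1)$ implies the $\alpha$-dependent growth condition
\[
N_n \;\geq\; \frac{2n^2+(4\alpha+2)n}{2\alpha+1}
\]
whenever $\alpha \geq 0$. Once this is established, the convergence statement is immediate from Corollary \ref{korollargleichmaessig3} (which is valid for all $\alpha>-\tfrac{1}{2}$, hence in particular for $\alpha\geq 0$).

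The key step is therefore the elementary inequality
\[
2n(n+1) \;\geq\; \frac{2n^2+(4\alpha+2)n}{2\alpha+1}
\qquad (\alpha \geq 0,\ n \in \N).
\]
To check it, I would multiply through by $2\alpha+1>0$ and divide by $2n$, reducing the claim to $(n+1)(2\alpha+1) \geq n+2\alpha+1$, i.e.\ $2\alpha n \geq 0$, which is obvious for $\alpha \geq 0$. Equality holds for $\alpha = 0$, reflecting the fact that $2n(n+1)$ is exactly the specialization of the right-hand side at $\alpha=0$; for $\alpha>0$ the inequality is strict with slack $4\alpha n^{2}/(2\alpha+1)$.

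Having verified this, I would apply Corollary \ref{korollargleichmaessig3} to the given sequence $(N_n)$ and to $f\in K$, and conclude uniform convergence of ${LS}_n^{N_n}[f]$ on $[-1,1]$. The main obstacle is essentially nonexistent: the content of the corollary is not a new estimate but a convenient, $\alpha$-free phrasing of the admissible growth rate for $N_n$. The only subtle point worth flagging is that the hypothesis $\alpha\geq 0$, rather than $\alpha>-\tfrac{1}{2}$, is precisely what makes the elementary inequality hold uniformly in $n$; for $-\tfrac12<\alpha<0$ the quantity $\frac{2n^2+(4\alpha+2)n}{2\alpha+1}$ grows faster than $2n(n+1)$, so the simplification breaks down and Corollary \ref{korollargleichmaessig3} would have to be invoked with its original, $\alpha$-dependent bound.
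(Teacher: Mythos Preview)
Your proposal is correct and follows essentially the same approach as the paper: both reduce the claim to Corollary \ref{korollargleichmaessig3} by verifying the elementary inequality $2n(n+1)\geq\frac{2n^2+(4\alpha+2)n}{2\alpha+1}$ for $\alpha\geq0$, with only cosmetic differences in the algebra.
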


\begin{proof}
The sequence $(N_n)_{n\in\mathbb{N}}$ fulfils the assumption of Corollary \ref{korollargleichmaessig3} independent of $\alpha$. Because one has
\begin{align*}
N_n\geq2n(n+1)\geq2n^2+2n\geq\frac{2n^2}{2\alpha+1}+\frac{\left(2\alpha+1\right)2n}{2\alpha+1}\geq\frac{2n^2+\left(4\alpha+2\right)n}{2\alpha+1}.
\end{align*}
\end{proof}

\end{subsection}

\begin{subsection}{Comparison to the continuous case}
\label{VergleichzumKontinuierlichenFall}

In this subsection we compare our approximation results of the discrete method of least squares with the results of the continuous method. The continuous method is the series expansion of a function by Jacobi polynomials $P_n\equiv P_n^{\alpha,\alpha}$, then the least square operator ${LS}_n$ can be represented by
\begin{align}\label{darstellungmethodekont}
{LS}_n[f]=\sum\limits_{k=0}^n{\frac{\left(P_k,f\right)_\varrho}{\left(P_k,P_k\right)_\varrho}P_k}.
\end{align}
This case was investigated by H. Brass in \cite{brass1980approximation}. First we provide in the following some important properties of the Jacobi polynomials:
\par\medskip
The Jacobi polynomials $P_n\equiv P_n^{\alpha,\beta}$ are classical orthogonal polynomials on the interval $I=\left[-1,1\right]$ of degree $n$. They can defined by the hypergeometric function as follows:

\begin{definition}[{cf., e.g., \cite[p. 216]{koekoek2010hypergeometric}}]\label{definitionhypjacobi}
Let $\alpha,\beta>-1$. The polynomials $P_n\equiv P_n^{\alpha,\beta}$ which are defined by
\begin{align}
\begin{aligned}
P_n^{\alpha,\beta}(x)&=\frac{(\alpha+1)_n}{n!}{}_2 F_1\left(\begin{matrix}-n,\ n+\alpha+\beta+1\\ \alpha+1\end{matrix};\frac{1-x}{2}\right)\\
&=\frac{(\alpha+1)_n}{n!}\sum\limits_{k=0}^n{\frac{\left(-n\right)_k\left(n+\alpha+\beta+1\right)_k}{\left(\alpha+1\right)_k}\frac{(1-x)^k}{2^kk!}},
\end{aligned}
\end{align}
for each $n\in\mathbb{N}_0$, are said to be \textbf{Jacobi polynomials}.
\end{definition}

The Jacobi polynomials $P_n^{\alpha,\beta}$ are orthogonal on the interval $I=\left[-1,1\right]$ with respect to the inner product
\begin{align*}
(f,g)_\varrho:=\int\limits_{-1}^1{f(x)g(x)\varrho(x)\mathrm{d}x},
\end{align*}
where $\varrho$ is the weight-function given by
\begin{align*}
\varrho(x):=(1-x)^\alpha(1+x)^\beta.
\end{align*}
They are normalized by
\begin{align*}%\label{orthogjacobi1a}
\left(P_n^{\alpha,\beta},P_n^{\alpha,\beta}\right)_\varrho=\frac{2^{\alpha+\beta+1}\Gamma(n+\alpha+1)\Gamma(n+\beta+1)}{(2n+\alpha+\beta+1)n!\Gamma(n+\alpha+\beta+1)}
\end{align*}
(cf., e.g., \cite[p. 217]{koekoek2010hypergeometric}).
\par\medskip
For $\max\left\{\alpha,\beta\right\}\geq-\frac{1}{2}$ the Jacobi polynomials are bounded on the interval $[-1,1]$ as follows:
\begin{align}\label{satzmaxjacobi1a}
\max_{x\in[-1,1]}{\left\lvert P_n^{\alpha,\beta}(x)\right\rvert}=\binom{n+\max\left\{\alpha,\beta\right\}}{n}
\end{align}
(cf., e.g., \cite[p. 786]{abramowitz1964handbook}).
\par\medskip
H. Brass proved the following result:

\begin{lemma}[cf. \cite{brass1980approximation}]\label{satzbrass2}
Let $\alpha\geq-\frac{1}{2}$. Further let
\begin{align}\label{satzbrass2a}
C_n:=\frac{\sup\limits_{x\in[-1,1]}{\left\lvert P_{n+1}(x)\right\rvert}}{\sup\limits_{x\in[-1,1]}{\left\lvert P_{n+1}^{(n+1)}(x)\right\rvert}}.
\end{align}
Then one has for each $f\in\mathcal{C}^{n+1}\left[-1,1\right]$
\begin{align}\label{satzbrass2b}
\sup\limits_{x\in[-1,1]}{\left\lvert f(x)-\sum\limits_{k=0}^n{\frac{\left(f,P_k\right)_\varrho}{\left(P_k,P_k\right)_\varrho}P_k(x)}\right\rvert}\leq C_n\sup\limits_{x\in[-1,1]}{\left\lvert f^{(n+1)}(x)\right\rvert}.
\end{align}
This estimation is not improvable in this sense, that the constant $C_n$ in inequality (\ref{satzbrass2b}) can not be replaced by a lower value under the above assumptions.
\end{lemma}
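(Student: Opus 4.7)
The plan is to derive Lemma \ref{satzbrass2} as a direct specialization of the abstract Brass Lemma \ref{satzbrass1} to the Jacobi setting, exactly as was done for the Hahn polynomials in Lemmata \ref{lemmadistributions2} and \ref{lemmadistributions3}. Concretely, I would take the absolutely continuous distribution $\mathrm{d}\sigma(x)=\varrho(x)\,\mathrm{d}x=(1-x)^\alpha(1+x)^\alpha\,\mathrm{d}x$ on $[-1,1]$ and introduce the normalized polynomials $\hat{P}_k:=P_k^{\alpha,\alpha}/\sqrt{(P_k,P_k)_\varrho}$. My task then reduces to checking that the two structural hypotheses of Lemma \ref{satzbrass1} hold, and to matching up the resulting constant with $C_n$.

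First I would verify the symmetry of $\mathrm{d}\sigma$: because $\alpha=\beta$ the weight satisfies $\varrho(-x)=\varrho(x)$, so the substitution $x\mapsto -x$ gives $\int_{-1}^{1}f(x)\,\mathrm{d}\sigma(x)=\int_{-1}^{1}f(-x)\,\mathrm{d}\sigma(x)$ for every $f\in\mathcal{C}[-1,1]$. Next I would verify that each $\hat{P}_k$ attains its sup-norm on $[-1,1]$ at $x=1$. For this I combine the cited bound \eqref{satzmaxjacobi1a}, which for $\alpha\geq -\tfrac{1}{2}$ and $\beta=\alpha$ gives $\max_{x\in[-1,1]}|P_k^{\alpha,\alpha}(x)|=\binom{k+\alpha}{k}$, with the elementary evaluation $P_k^{\alpha,\alpha}(1)=(\alpha+1)_k/k!=\binom{k+\alpha}{k}$ obtained from Definition \ref{definitionhypjacobi}. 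Dividing through by $\sqrt{(P_k,P_k)_\varrho}$ yields $\sup_{x\in[-1,1]}|\hat{P}_k(x)|=\hat{P}_k(1)$ for $k=0,\dots,n+1$.

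With these two properties in hand, Lemma \ref{satzbrass1} applies directly to the family $\{\hat{P}_k:k=0,\dots,n+1\}$ and delivers the error bound with constant $\sup|\hat{P}_{n+1}|/\sup|\hat{P}_{n+1}^{(n+1)}|$. Because the normalization factor $\sqrt{(P_{n+1},P_{n+1})_\varrho}$ cancels between numerator and denominator, this ratio is exactly the $C_n$ of \eqref{satzbrass2a}. On the other side, the partial sum $\sum_{k=0}^{n}(f,\hat{P}_k)_\sigma\hat{P}_k$ equals the Jacobi partial sum $\sum_{k=0}^{n}\frac{(f,P_k)_\varrho}{(P_k,P_k)_\varrho}P_k$, since the squared normalization constants again cancel, so the inequality \eqref{satzbrass2b} follows. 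The non-improvability assertion is inherited from the corresponding statement in Lemma \ref{satzbrass1}, applied to the present $\mathrm{d}\sigma$ and $\{\hat{P}_k\}$.

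The only nontrivial ingredient is the location of the extremum of $|P_{k}^{\alpha,\alpha}|$, and this is precisely the restriction $\alpha\geq -\tfrac{1}{2}$ that appears in the hypothesis; for smaller $\alpha$ the maximum of the ultraspherical polynomials migrates into the interior and the Brass framework no longer applies. In that sense the \emph{main obstacle} is simply tracking this classical fact, which the paper has already recorded in \eqref{satzmaxjacobi1a}. Everything else is bookkeeping around the abstract Lemma \ref{satzbrass1}, in perfect analogy with the discrete proof given in the previous section.
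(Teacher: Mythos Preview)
Your proposal is correct and matches the paper's own treatment: the paper does not spell out a proof but simply remarks that the result ``follows also from Lemma~\ref{satzbrass1}'', and your verification of the symmetry of $\mathrm{d}\sigma$ and of $\sup_{x\in[-1,1]}|\hat{P}_k(x)|=\hat{P}_k(1)$ via \eqref{satzmaxjacobi1a} is exactly the specialization needed to make that remark precise.
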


This result follows also from Lemma \ref{satzbrass1} (cf. \cite{brass1984error}). In the following Lemma we determine the factor $C_n$ in equation (\ref{satzbrass2a}) of the previous Lemma \ref{satzbrass2}.

\begin{lemma}\label{lemmagleichkont1}
Let $\alpha\geq-\frac{1}{2}$. Then for the constant $C_n$ of Lemma \ref{satzbrass2} holds
\begin{align}\label{lemmagleichkont1a}
C_n=\frac{\sup\limits_{x\in[-1,1]}{\left\lvert P_{n+1}(x)\right\rvert}}{\sup\limits_{x\in[-1,1]}{\left\lvert P_{n+1}^{(n+1)}(x)\right\rvert}}=\frac{2^{n+1}\Gamma(n+\alpha+2)\Gamma(n+2\alpha+2)}{(n+1)!\Gamma\left(2n+2\alpha+3\right)\Gamma(\alpha+1)}.
\end{align}
\end{lemma}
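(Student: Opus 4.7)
The plan is to evaluate the numerator and denominator of $C_n$ separately, in direct analogy with the Hahn-polynomial computation of Lemma \ref{lemmadistributions4}. Only two ingredients are required: the sup-norm bound (\ref{satzmaxjacobi1a}) for symmetric Jacobi polynomials, and the hypergeometric representation of Definition \ref{definitionhypjacobi}.

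For the numerator I would apply (\ref{satzmaxjacobi1a}) in the symmetric case $\beta=\alpha$, which identifies $\sup_{x\in[-1,1]}\lvert P_{n+1}^{\alpha,\alpha}(x)\rvert$ with the binomial coefficient $\binom{n+1+\alpha}{n+1}$, and then rewrite it as a ratio of $\Gamma$-functions via $(\alpha+1)_{n+1}=\Gamma(n+\alpha+2)/\Gamma(\alpha+1)$.

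For the denominator I would exploit the fact that $P_{n+1}^{\alpha,\alpha}$ has exact degree $n+1$, so its $(n+1)$-th derivative is the constant equal to $(n+1)!$ times its leading coefficient. To extract that leading coefficient I would isolate the $k=n+1$ term of the hypergeometric sum in Definition \ref{definitionhypjacobi}, differentiate $(1-x)^{n+1}$ to get $(-1)^{n+1}(n+1)!$, and simplify using $(-n-1)_{n+1}=(-1)^{n+1}(n+1)!$ together with the cancellation of $(\alpha+1)_{n+1}$ against the outer prefactor. Rewriting $(n+2\alpha+2)_{n+1}$ as $\Gamma(2n+2\alpha+3)/\Gamma(n+2\alpha+2)$ yields a closed form in $\Gamma$-functions and a power of $2$.

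Forming the quotient of the two expressions and collecting the $\Gamma$-factors gives (\ref{lemmagleichkont1a}) directly. I do not anticipate any genuine obstacle: the argument is essentially bookkeeping with Pochhammer symbols and is strictly parallel to, and in fact slightly simpler than, the Hahn-polynomial computation already carried out in Lemma \ref{lemmadistributions4}; the only nontrivial external input is the boundedness statement (\ref{satzmaxjacobi1a}).
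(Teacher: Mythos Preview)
Your proposal is correct and follows essentially the same approach as the paper: the paper likewise computes the $(n+1)$-th derivative by isolating the $k=n+1$ term of the hypergeometric sum in Definition~\ref{definitionhypjacobi}, applies the identity $(-n-1)_{n+1}=(-1)^{n+1}(n+1)!$ and rewrites $(n+2\alpha+2)_{n+1}$ via $\Gamma$-functions, and then uses (\ref{satzmaxjacobi1a}) for the numerator. The only cosmetic difference is that the paper treats the denominator first and the numerator second.
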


\begin{proof}
First the Jacobi polynomials are given by Definition \ref{definitionhypjacobi}
\begin{align*}
P_{n+1}(x)=\frac{(\alpha+1)_{n+1}}{(n+1)!}\sum\limits_{k=0}^{n+1}{\frac{\left(-n-1\right)_k\left(n+2\alpha+2\right)_k}{\left(\alpha+1\right)_k}\frac{(1-x)^k}{2^kk!}}.
\end{align*}
We differentiate $(n+1)$ times and obtain
\begin{align*}
\begin{aligned}
P_{n+1}^{(n+1)}(x)&=\frac{(\alpha+1)_{n+1}}{(n+1)!}\frac{\left(-n-1\right)_{n+1}\left(n+2\alpha+2\right)_{n+1}}{\left(\alpha+1\right)_{n+1}2^{n+1}(n+1)!}\frac{\mathrm{d}^{n+1}}{\mathrm{d}x^{n+1}}(1-x)^{n+1}\\
&=\frac{\left(-n-1\right)_{n+1}\left(n+2\alpha+2\right)_{n+1}}{(n+1)!2^{n+1}(n+1)!}(-1)^{n+1}(n+1)!.
\end{aligned}
\end{align*}
With the transformations
\begin{align*}
\left(-n-1\right)_{n+1}=(-n-1)(-n)\cdot\ldots\cdot(-1)=(-1)^{n+1}(n+1)!
\end{align*}
we have
\begin{align*}
P_{n+1}^{(n+1)}(x)=\frac{(-1)^{n+1}(n+1)!\left(n+2\alpha+2\right)_{n+1}}{2^{n+1}(n+1)!}(-1)^{n+1}=\frac{\left(n+2\alpha+2\right)_{n+1}}{2^{n+1}}.
\end{align*}
Then one has
\begin{align*}
\sup\limits_{x\in[-1,1]}{\left\lvert P_{n+1}^{(n+1)}(x)\right\rvert}=\frac{\left(n+2\alpha+2\right)_{n+1}}{2^{n+1}}=\frac{\Gamma\left(2n+2\alpha+3\right)}{2^{n+1}\Gamma\left(n+2\alpha+2\right)}.
\end{align*}
With equation (\ref{satzmaxjacobi1a}) follows
\begin{align*}
\sup_{x\in[-1,1]}{\left\lvert P_{n+1}(x)\right\rvert}=\binom{n+1+\alpha}{n+1}=\frac{\Gamma(n+\alpha+2)}{\Gamma(n+2)\Gamma(\alpha+1)}.
\end{align*}
Then we have equation (\ref{lemmagleichkont1a}).
\end{proof}

Now we compare the constants $D_{n,N}$ of Theorem \ref{theoremgleichmaessigekonvergenz1e} (discrete case) and $C_n$ of Lemma \ref{satzbrass2} (continuous case), which are both not improvable. For the quotient $D_{n,N}/C_n$ one has with $\alpha>-\frac{1}{2}$
\begin{align*}
\frac{D_{n,N}}{C_n}=\frac{N!}{N^{n+1}(N-n-1)!}=\prod_{i=0}^n{\left(1-\frac{i}{N}\right)}\leq1,
\end{align*}
whereby in the discrete case we have the additional assumption $n+1\leq \frac{1}{2}-\alpha+\frac{1}{2}\sqrt{(2\alpha+1)(2\alpha+2N+1)}$. We define a function class $\mathcal{K}_n$ by
\begin{align}\label{klassefunkkn}
\mathcal{K}_n:=\left\{f\in\mathcal{C}^n\left[-1,1\right]:\ \sup\limits_{x\in[-1,1]}{\left\lvert f^{(n)}(x)\right\rvert\leq1}\right\},
\end{align}
then we obtain the following Corollary:

\begin{korollar}\label{korollargleichkont2}
Let $\alpha=\beta>-\frac{1}{2}$. Further let ${LS}_n$ be the continuous least square operator according to equation (\ref{darstellungmethodekont}) and let ${LS}_n^N$ be the discrete least square operator according to equation (\ref{darstellungmethodediskret}) with $n+1\leq \frac{1}{2}-\alpha+\frac{1}{2}\sqrt{(2\alpha+1)(2\alpha+2N+1)}$. Then one has
\begin{align}
\sup\limits_{f\in\mathcal{K}_{n+1}}{\sup\limits_{x\in[-1,1]}{\left\lvert f(x)-{LS}_n^N[f](x)\right\rvert}}=\prod_{i=0}^n{\left(1-\frac{i}{N}\right)}\sup\limits_{f\in\mathcal{K}_{n+1}}{\sup\limits_{x\in[-1,1]}{\left\lvert f(x)-{LS}_n[f](x)\right\rvert}}.
\end{align}
\end{korollar}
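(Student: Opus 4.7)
The plan is to exploit the sharpness statements in Theorem \ref{theoremgleichmaessigekonvergenz1e} and Lemma \ref{satzbrass2}, since the assertion is an \emph{equality}, not an inequality, and therefore really just amounts to evaluating the worst-case constants on $\mathcal{K}_{n+1}$ explicitly and dividing them.

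First I would rewrite the two worst-case quantities. Since Theorem \ref{theoremgleichmaessigekonvergenz1e} asserts that the constant $D_{n,N}$ in (\ref{theoremgleichmaessigekonvergenz1ae}) is best possible under the assumption $f\in\mathcal{C}^{n+1}[-1,1]$, and since the right-hand side of that inequality is exactly $D_{n,N}\cdot\|f^{(n+1)}\|_\infty$, the restriction to the class $\mathcal{K}_{n+1}$ (where $\|f^{(n+1)}\|_\infty\le 1$) yields
\begin{align*}
\sup_{f\in\mathcal{K}_{n+1}}\sup_{x\in[-1,1]}\bigl|f(x)-{LS}_n^N[f](x)\bigr|=D_{n,N}.
\end{align*}
In exactly the same way Lemma \ref{satzbrass2} gives
\begin{align*}
\sup_{f\in\mathcal{K}_{n+1}}\sup_{x\in[-1,1]}\bigl|f(x)-{LS}_n[f](x)\bigr|=C_n,
\end{align*}
where $C_n$ is the constant determined in Lemma \ref{lemmagleichkont1}.

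Next I would compute the ratio $D_{n,N}/C_n$ by simply dividing the closed-form expressions from Theorem \ref{theoremgleichmaessigekonvergenz1e} and Lemma \ref{lemmagleichkont1}. All $\Gamma$-factors and the factor $2^{n+1}/(n+1)!$ cancel, leaving
\begin{align*}
\frac{D_{n,N}}{C_n}=\frac{N!}{N^{n+1}(N-n-1)!}=\prod_{i=0}^{n}\left(1-\frac{i}{N}\right),
\end{align*}
which is precisely the quotient already observed in the comparison paragraph preceding the corollary.

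Combining the two identities for the worst-case errors with this ratio gives the claimed equation. The only thing to verify carefully is that the sharpness statements in Theorem \ref{theoremgleichmaessigekonvergenz1e} and Lemma \ref{satzbrass2} do transfer to equalities over $\mathcal{K}_{n+1}$; but this is immediate from the standard reformulation of "not improvable": the bound $\le D_{n,N}\|f^{(n+1)}\|_\infty$ is sharp iff for every $\varepsilon>0$ there exists $f$ with $\|f^{(n+1)}\|_\infty\le 1$ whose error exceeds $D_{n,N}-\varepsilon$, which is exactly the statement that the supremum over $\mathcal{K}_{n+1}$ equals $D_{n,N}$. No further obstacle arises; the proof is then a two-line concatenation of these observations.
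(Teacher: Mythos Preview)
Your proposal is correct and follows exactly the paper's own argument: the paper does not give a separate proof environment for this corollary but derives it in the preceding paragraph by invoking the non-improvability of $D_{n,N}$ (Theorem \ref{theoremgleichmaessigekonvergenz1e}) and of $C_n$ (Lemma \ref{satzbrass2}, with the explicit value from Lemma \ref{lemmagleichkont1}) and then computing $D_{n,N}/C_n=\prod_{i=0}^n(1-i/N)$. Your added remark on why ``not improvable'' transfers to an equality over $\mathcal{K}_{n+1}$ is a welcome clarification of a step the paper leaves implicit.
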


\begin{anmerkung}\label{anmerkunggleichkont3}
For the practical use we obtain for $n\in\mathbb{N}_0$ with Corollary \ref{korollargleichkont2} the following guarantee: The \glqq worst case\grqq\ respecting to the class $\mathcal{K}_{n+1}$ is in the continuous case worse than the corresponding discrete case, if the polynomial degree $n$ and the number of nodes $N+1$ fulfil the inequality $n+1\leq \frac{1}{2}-\alpha+\frac{1}{2}\sqrt{(2\alpha+1)(2\alpha+2N+1)}$.
\end{anmerkung}

\begin{anmerkung}\label{anmerkunggleichkont4}
If we consider the \glqq worst case\grqq again
\begin{align}\label{anmerkunggleichkont4a}
\sup\limits_{f\in\mathcal{K}_{n+1}}{\sup\limits_{x\in[-1,1]}{\left\lvert f(x)-{LS}_n^N[f](x)\right\rvert}},
\end{align}
we obtain:\\
A ratio ${n^k}/N\rightarrow 0$ with any $k>2$ give us no better approximation in the sense of (\ref{anmerkunggleichkont4a}) than the ratio ${n^2}/N\rightarrow 0$.
\end{anmerkung}

Further comparisons with polynomial interpolation, method of least squares on different nodes and polynomial of best approximation you can find in \cite{goertz2018konvergenz}.

\end{subsection}

\end{section}

\newpage
% Literaturliste endgueltig anzeigen
% \nocite{*}
\printbibliography[heading=bibintoc, title=References]
%\bibliography{literatur}

\end{document}